\definecolor{myGreen}{rgb}{0.18039216 0.49803922 0.09411765}
\definecolor{darkred}{rgb}{0.65, 0.0, 0.0}
\declaretheorem[name=Theorem,numberwithin=section]{theorem}
\declaretheorem[name=Lemma,sibling=theorem]{lemma}
\declaretheorem[name=Proposition,sibling=theorem]{proposition}
\declaretheorem[name=Corollary,sibling=theorem]{corollary}
\declaretheorem[name=Definition,sibling=theorem,style=definition]{definition}
\declaretheorem[name=Remark,sibling=theorem,style=remark]{remark}
\declaretheorem[name=Example,sibling=theorem,style=remark]{example}
\crefname{algorithm}{Algorithm}{Algorithms}
\Crefname{algorithm}{Algorithm}{Algorithms}
\newcommand{\R}{\mathbb{R}}
\DeclareMathOperator{\GL}{GL}
\newcommand\rank{\mathsf{rk}}
\title[An Efficient Algorithm for Path Recovery from Signature Tensors]{An Efficient Algorithm for Path Recovery \\from Signature Tensors}
\author[L. Schmitz]{Leonard Schmitz}
\address{
TU Berlin, Algebraic and Geometric Methods in Data Analysis}
\email{lschmitz@math.tu-berlin.de}
\thanks{The author acknowledges funding by the Deutsche
Forschungsgemeinschaft (DFG, German Research Foundation) – CRC/TRR 388
``Rough Analysis, Stochastic Dynamics and Related Fields'' – Project A04, 
516748464.}
\date{}
\begin{document}
\begin{abstract}
We present a new algorithm for recovering paths from their third-order signature tensors, an inverse problem in rough analysis. Our algorithm provides the exact solution to this recovery problem and improves upon current approaches by an order of magnitude. It relies on generalized normal forms and stabilizers of group actions via matrix-tensor congruence. We apply randomized transformation techniques that avoid the task of solving nonlinear polynomial systems associated to degenerate paths, and accompany our methods with an efficient implementation in the computer algebra system \texttt{OSCAR}. 
\end{abstract}
\maketitle

\textbf{Keywords:} Path recovery, signature tensors, congruence group action, computer algebra

\textbf{MSC codes:}
60L10, % 60L10: signatures and data steams 
%22E25, % 22E25: Nilpotent and solvable Lie groups
15A21, % 15A21: Canonical forms, reductions, classification
68W30,  %  Symbolic computation and algebraic computation
14Q15  % 14Q15: Higher-dimensional varieties
%65Y20 % Complexity and performance of numerical algorithms

\section{Introduction}

An important inverse problem in rough analysis is the task of recovering a path from Chen’s
iterated-integrals signature.
At third truncation level and \emph{dimension} $d\geq 2$, the signature inversion can be formalized through the group orbit of a special $d\times d\times d$ \emph{core tensor} $C$.    
For any invertible matrix $A\in\GL_d$, the \emph{matrix-tensor congruence action} on $C$ yields a tensor $G:=A*C$ with entries
\begin{equation}\label{eq:learningTask}G_{i,j,k}=\sum_{\alpha=1}^d\sum_{\beta=1}^d\sum_{\gamma=1}^dC_{\alpha,\beta,\gamma}A_{i,\alpha}A_{j,\beta}A_{k,\gamma}\end{equation}
for $1\leq i,j,k\leq d$. Hence, the tensor $G$ admits a (symmetric) Tucker decomposition; see \cite[Section 4]{kolda2009tensor}. 
In the context of rough analysis, the  matrix $A$ encodes a piecewise linear path with $d$ segments, and $G$ its third level signature. Throughout this work, the core tensor $C$ is fixed and represents the signature of the \emph{canonical axis path} in $\R^d$. %from \cite[Equation (8)]{bib:PSS2019} or \cite[Example 2.1]{bib:AFS2019}. 
We consider $G$ as an element in the group orbit ${\GL_d}*C$ under the action \eqref{eq:learningTask}. 
The \emph{recovery task} is to find $A\in\GL_d$ such that  $G=A*C$, thereby recovering the path from its signature. 
In \cite[Theorem 6.2]{bib:PSS2019} it is shown that this problem has a unique (real) solution.
Our main result is an efficient algorithm (\Cref{alg:linear_learning}) that provably solves the recovery task.
\begin{theorem}\label{thm:mainResult} For every $G\in {\GL_d}*C$, 
    \Cref{alg:linear_learning} computes $A\in\GL_d$ such that $G=A*C$. 
     In expectation, it requires $\mathcal{O}(d^4)$ elementary operations. 
\end{theorem}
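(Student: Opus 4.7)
The plan is to prove the theorem in three stages: an algebraic recovery procedure that succeeds on a Zariski-open set of ``generic'' matrices, a randomization that reduces the general case to the generic one with constant success probability per trial, and a routine arithmetic count. To begin, I would identify a Zariski-open subset $U \subseteq \GL_d$ characterizing the nondegenerate paths, and design a decoding map $D$ such that $D(B * C) = B$ for every $B \in U$ and such that $D$ can be evaluated by linear algebra alone. The natural strategy is to show that on $U$ the entries of $B$ can be extracted from $B * C$ by solving $\mathcal{O}(d)$ linear systems of size at most $d \times d$, whose coefficients are read off from contractions or slices of $B * C$. The main obstacle, and the heart of the proof, will be to establish that the coefficient matrices of these systems are generically invertible; by upper-semicontinuity of rank this reduces to exhibiting a single $B_0 \in \GL_d$ for which invertibility holds, which should follow from the explicit combinatorial form of the axis core tensor $C$.

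Next, given $G = A * C$ with $A$ possibly in the complement of $U$, the algorithm will draw a random $R \in \GL_d$ (entries sampled, say, uniformly from a large finite set) and apply $D$ to $R * G = (RA) * C$ to obtain a candidate $B$, then output $A' := R^{-1} B$. A Schwartz--Zippel argument shows that $\Pr[RA \in U]$ is bounded below by an absolute constant, since the failure locus is a proper algebraic subvariety of $\GL_d$. A final verification computes $A' * C$ via \eqref{eq:learningTask} and compares it to $G$; together with the uniqueness of the real solution from \cite[Theorem~6.2]{bib:PSS2019}, this yields a Las Vegas guarantee, with every accepted output being a genuine preimage of $G$ under the congruence action. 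The role of the stabilizer of $C$ enters here as a consistency check: any ambiguity in the decoding of step one is quotiented out by $\mathrm{Stab}(C)$, and over $\R$ this quotient is trivial on $\GL_d$, so $D$ is well-defined up to this verified equality.

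Finally, for the arithmetic cost, each tensor-matrix action \eqref{eq:learningTask} factors through three one-mode contractions of cost $\mathcal{O}(d^4)$; matrix inversion and matrix multiplication cost $\mathcal{O}(d^3)$; and the decoding itself reduces to $\mathcal{O}(d)$ linear solves of size $d$, again $\mathcal{O}(d^4)$ in aggregate. Hence every arithmetic substep of a single attempt fits in $\mathcal{O}(d^4)$, and since the expected number of attempts is $\mathcal{O}(1)$, the total expected cost is $\mathcal{O}(d^4)$ elementary operations, as claimed.
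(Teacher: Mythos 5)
Your high-level ingredients match the paper's: a genericity argument with randomized change of coordinates to escape degenerate loci (the paper uses the measure-zero lemma of Okamoto where you invoke Schwartz--Zippel), reduction of the cubic system $G=A*C$ to linear algebra, the trivial-stabilizer result of \cite[Theorem 6.2]{bib:PSS2019} for uniqueness, and an $\mathcal{O}(d^4)$ count. However, there is a genuine gap: the entire technical content of the theorem is the construction of the decoding map, and your proposal defers exactly that (``the natural strategy is to show\dots'', ``the main obstacle\dots will be to establish\dots''). The paper does not recover $A$ by one round of linear solves read off from slices of the original tensor $G$. Instead it proceeds by an adaptive, iterative orbit reduction: at step $s$ it proves (\Cref{thm:substab}) that membership in $(\mathrm{I}_s\oplus\GL_{d-s})*C$ is characterized by finitely many entry conditions, solves a linear system whose coefficients come from the tensor \emph{after} the previous steps' Gau{\ss} transformations have been applied (\Cref{thm:linearSolSolvesAlgebraicSystem}), and then reads off lower and diagonal transformations that push the tensor into the smaller orbit (\Cref{thm:LowerTransformations}). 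The linear systems are sequential and their right-hand sides depend on earlier solutions, so your claim that the coefficients are ``read off from contractions or slices of $B*C$'' directly is strictly stronger than what the paper establishes, and you give no construction or invertibility proof for it.

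A second concrete problem with the uniform genericity claim: in the paper's scheme the linear systems for the last two reduction steps ($s=d-1$ and $s=d-2$, i.e.\ the two- and three-dimensional residual problems) are \emph{always} underdetermined, not just on a proper subvariety, and \Cref{sec:lowdimCase} handles them by entirely different explicit transformations. This shows that ``the failure locus is a proper algebraic subvariety, hence randomization fixes it'' cannot be taken for granted for whatever linear systems your one-shot decoder would use; if a subsystem is identically rank-deficient, Schwartz--Zippel buys nothing. Finally, note that the statement asserts correctness of \Cref{alg:linear_learning} specifically; your argument, even if completed, would establish the existence of \emph{some} $\mathcal{O}(d^4)$ Las Vegas recovery procedure rather than the correctness of the stated algorithm. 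Your complexity accounting is sound for the algorithm you sketch (and, incidentally, the paper's own $\mathcal{O}(d^4)$ bound relies on the congruence updates being by sparse elementary matrices, since a dense congruence $R*G$ already costs $\mathcal{O}(d^4)$ per application), but the correctness core is missing.
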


Our main strategy is to compute Gau{\ss} transformations until we reach the core tensor, which acts as our tensor analogous to congruence normal forms of matrices. By inverting all of our transformations, we recover the original path.
We implement our methods in \texttt{OSCAR} \cite{OSCAR-book}, based on exact arithmetic, and report on wall clock times; see \Cref{table:maintableTimings}.

In particular, we outperform previous computations \cite[Table 1]{bib:PSS2019} or \cite[Section~6]{bib:AFS2019} based on Gr\"obner bases \cite{buchberger1965algorithmus}, where $G=A*C$ is considered as a non-linear system of $d^3$ inhomogeneous polynomials in $d^2$ variables, and Gr\"obner bases suffer from a curse of dimensionality, e.g., \cite{dube1990structure}. 

\begin{table}[ht]
\caption{Averaged path recovery timings (in seconds, rounded down, on a MacBook Pro M5 24GB)}
    \centering
    \begin{small}
    $\begin{array}{|r|cccccc||ccccccc|}
    \hline
   d & 2& 3& 4 & 5 & 6&7&10&20&30&40&50&60&70\\\hline
   \text{GBF4}&0&0&0& 1& 35 & \textsc{oom}  & -
        & - & - & - & - & - & - \\
 %  \textbf{ours}&0.0002&0.0004& 0.001 & 0.002  & 0.007& 0.011& 0.017 &0.02& 0.03& 0.19& 0.73&5.01&18.06\\
  \textbf{ours}&0&0& 0& 0& 0
&0& 0&0&1&5&16 &29&51\\
   \hline
\end{array}$
\end{small}
\label{table:maintableTimings}
\end{table}

All our comparisons with Gr\"obner bases (GBF4) use the \emph{F4 algorithm} \cite{faugere1999new} provided by \texttt{OSCAR} (in fact \texttt{msolve} \cite{berthomieu2021msolve}). We follow the convention from \cite[Table~1]{bib:PSS2019} and report only on the wall clock times for pure Gr\"obner computations. 
Our implementation accompanying this paper is available at 
\begin{center}
\url{https://github.com/leonardSchmitz/efficient-path-recovery-from-signatures}.
\end{center}

\subsection*{Related work}

Signature inversion is a well-studied problem; see   \cite{lyons2017hyperbolic,lyons2018inverting} for a general overview. 
In \cite{fermanian2024insertion} an \emph{insertion method} is proposed, comparing increasing truncation levels and minimizing their differences. Recently, \cite{vaucher2025lie} introduced a \emph{Lie-adaptive method} that fixes an exact path length and thus changes the optimization problem, while \cite{rauscher2025shortest} explores optimal control approaches and connections to sub-Riemannian geodesics. 

Note that all those works rely on, or are presented for, low-dimensional setups with adjusted truncation level, and therefore lie in a regime different from the approach followed by this work. 

In particular, optimization methods suffer from the relatively high number of unknowns and degrees of the objective function. For example, \cite[Table 2]{bib:PSS2019} reports a $85\%$ \emph{success rate} in dimension $d=10$ for numerically feasible subclasses of piecewise linear paths, combining the Broyden-Fletcher-Goldfarb-Shanno algorithm with a trust region Newton method. 
Here, our method requires $0.006$ seconds on average and has a $100\%$ success rate by design. 

Certain recovery guarantees for signature barycenters appear in \cite{amendola2025learning}, relying on piecewise linear path with few segments and second truncation level. 

Neural-network-based inversion strategies have been proposed in, e.g., \cite{NEURIPS2019_d2cdf047}; however, these approaches incur high computational costs and provide limited theoretical guarantees.

%\subsection*{Outline}
%\Cref{sec:sigTenCongr} provides a brief introduction to signature tensors and their congruence orbits. \Cref{sec:learningAlgorithm} introduces our recovery algorithm,  and it lays out the main results that lead to a proof of correctness. In  \Cref{sec:lowdimCase} we focus on the low-dimensional regime, which behaves slightly different from the general setup. \Cref{sec:proofs} contains the omitted proofs and additional details, including the verification of our main result, \Cref{thm:mainResult}.
%We illustrate each step of our algorithm with a running toy example in dimension four; see Examples \ref{ex:d4recovery}, \ref{ex:d4recovery_precont} and \ref{ex:d4recovery_cont}. This is the minimal dimension at which the general strategy of the algorithm becomes apparent.

\section{Signature tensors and congruence orbits}\label{sec:sigTenCongr}
We recall the notion of Chen's iterated-integrals signature and its associated group orbits with respect to the matrix-tensor congruence action. 
A \emph{path} 
$
X : [0,1] \to \R^d
$
is a (smooth enough) continuous curve such that the following \emph{iterated integrals} 
\begin{align}
  \label{eq:integrals}
  \sigma(X)_{i,j,k}:=\int_0^1\int_0^{t_3}\int_0^{t_2} \dot X_{i}(t_1) \dot X_j(t_2) \dot X_k(t_3)\,\mathrm dt_1 \mathrm dt_2 \mathrm dt_3
\end{align}
are defined for all 
$1\leq i,j,k\leq d$. In this case, \eqref{eq:integrals} defines a real-valued $d\times d\times d$ tensor $\sigma(X)$, called the \emph{(third level) signature} of $X$. For the definition of higher level signatures we refer to \cite[Definition 7.2]{Friz_Victoir_2010}. 

We recall the following special path that serves as our \emph{dictionary} for path recovery according to  \cite[Equation (8)]{bib:PSS2019}. 

\begin{definition}
 Let $
  \psi:[0,1]\rightarrow\R^d$ denote the axis path of order $d$, defined as the unique piecewise linear path with $d$ segments whose graph interpolates the $d+1$ equidistant support points $$
  \left(\frac id, \sqrt[3]6\sum_{j=1}^ie_j\right) \in [0,1] \times \R^d$$ in consecutive order $0 \leq i \leq d$. Here, $e_i$ denotes the standard basis of $\R^d$.  
\end{definition}

Using this special path, we define our $d\times d\times d$ \emph{core tensor} $C:=\sigma(\psi)$ through the third signature of $\psi$. We recall the closed-form expression from \cite[Example 2.1]{bib:AFS2019} for all its indices $1\leq i,j,k\leq d$, 
\begin{equation}\label{eq:coretensor}
C_{i,j,k}=\begin{cases}1&i=j=k,\\
3&i<j=k\;\text{ or }\;i=j<k,\\
6&i<j<k,\\
0&\text{elsewhere.}
\end{cases}\end{equation}
 The expression \eqref{eq:coretensor} can serve as an alternative definition. Without loss of generality, we have scaled the canonical axis path, and thus normalized our core tensor (with constant $1$ on its diagonal), allowing us to perform all our computations with rational numbers; see \cite[Example 7.13]{amendola2025learning} for an illustration. Note further, that $C$ is \emph{upper triangular} (\cite[eq (9) and p. 13]{bib:PSS2019} or \cite[p. 25]{bib:AFS2019}),  meaning that its support lies on the simplex $1\leq i\leq j\leq k\leq d$. This is an important observation and essential for our recovery algorithm. 
\begin{example}\label{ex:coretensor_d4}
   Here and throughout, we illustrate all $3$-tensors $G\in\R^{d\times d\times d}$ by the (first mode) tensor-to-matrix folding 
   \begin{equation*}\left[
    \begin{array}{ccccccccccc}
    G_{1,1,1} & \dots & G_{1,d,1} & \vrule &  &  &  & \vrule & G_{1,1,d} & \dots & G_{1,d,d} \\
    \vdots & \ddots & \vdots  & \vrule &  & \dots&   & \vrule &\vdots & \ddots & \vdots   \\
    G_{d,1,1} & \dots & G_{d,d,1}  & \vrule &  &  &  & \vrule & G_{d,1,d} & \dots & G_{d,d,d}  \end{array}\right]
\end{equation*}

\noindent
   according to \cite[Section 2.4]{kolda2009tensor}, e.g., 
\begin{equation*}\left[
    \begin{array}{ccccccccccccccccccc}1 & 0 & 0 & 0 & \vrule & 3 & 3 & 0 & 0 & \vrule & 3 & 6 & 3 & 0 & \vrule & 3 & 6 & 6 & 3 \\0 & 0 & 0 & 0 & \vrule & 0 & 1 & 0 & 0 & \vrule & 0 & 3 & 3 & 0 & \vrule & 0 & 3 & 6 & 3 \\0 & 0 & 0 & 0 & \vrule & 0 & 0 & 0 & 0 & \vrule & 0 & 0 & 1 & 0 & \vrule & 0 & 0 & 3 &3 \\0 & 0 & 0 & 0 & \vrule & 0 & 0 & 0 & 0 & \vrule & 0 & 0 & 0 & 0 & \vrule & 0 & 0 & 0 & 1\end{array}\right]
\end{equation*}

\noindent
illustrates the core tensor $C$ with entries \eqref{eq:coretensor} in dimension $d=4$.  
\end{example}

The matrix-tensor congruence is a group action of $\GL_d$ on tensors over $\R^d$. For $A\in\GL_d$ and $C\in\R^{d\times d\times d}$ it is defined by $A*C:=A\times_1A\times_2A\times_3C$ where $\times_i$ denotes the $i$-mode matrix-tensor multiplication \cite[Section 2.5]{kolda2009tensor}. The explicit formula for the entries of the resulting tensor is given in \eqref{eq:learningTask}. 
Since integration is linear, it is easy to see that the signature map is equivariant; see also \cite[Lemma 2.1]{bib:PSS2019}. 
\begin{lemma}\label{lem:equivariance_signature}
For all $X:[0,1]\rightarrow\R^d$ and $A\in\R^{d\times d}$,
\begin{equation}
\sigma(A* X)=A*\sigma(X),
\end{equation}
where $A* X:[0,1]\rightarrow\R^d,t\mapsto A\,X(t)$ is the \emph{linearly transformed axis path}, and $A$ acts on $\sigma(X)$ via the matrix-tensor congruence operation.
\end{lemma}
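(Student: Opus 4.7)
The plan is to reduce the identity to a direct computation, using only the linearity of differentiation and of the iterated integral, together with the explicit formula \eqref{eq:learningTask} for the congruence action. No geometric or algebraic obstruction appears; the main point is to track indices carefully and justify the interchange of sums and integrals.

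First I would set $Y := A * X$, so that $Y(t) = A\,X(t)$ and, by linearity of the derivative, $\dot Y_i(t) = \sum_{\alpha=1}^d A_{i\alpha}\,\dot X_\alpha(t)$ for every $t\in[0,1]$ and $1\leq i\leq d$. Substituting this expression into the defining formula \eqref{eq:integrals} of $\sigma(Y)_{ijk}$ yields
\begin{equation*}
\sigma(Y)_{ijk} = \int_0^1\!\!\int_0^{t_3}\!\!\int_0^{t_2} \Bigl(\sum_\alpha A_{i\alpha}\dot X_\alpha(t_1)\Bigr)\Bigl(\sum_\beta A_{j\beta}\dot X_\beta(t_2)\Bigr)\Bigl(\sum_\gamma A_{k\gamma}\dot X_\gamma(t_3)\Bigr)\,\mathrm{d}t_1\,\mathrm{d}t_2\,\mathrm{d}t_3.
\end{equation*}

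Next I would expand the product of sums into a triple sum over $\alpha,\beta,\gamma$, and pull the (finitely many, constant) entries $A_{i\alpha}, A_{j\beta}, A_{k\gamma}$ outside the iterated integral. This leaves exactly the iterated integral defining $\sigma(X)_{\alpha\beta\gamma}$, so that
\begin{equation*}
\sigma(Y)_{ijk} = \sum_{\alpha,\beta,\gamma=1}^d A_{i\alpha}A_{j\beta}A_{k\gamma}\,\sigma(X)_{\alpha\beta\gamma},
\end{equation*}
which is exactly the right-hand side of \eqref{eq:learningTask} applied to $C=\sigma(X)$, i.e. $(A*\sigma(X))_{ijk}$.

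The only point requiring a word of care is the interchange of the finite summations with the iterated integrals, which is immediate since each sum has only $d$ terms and the integrands are continuous (by the smoothness assumption implicit in the definition of a path). There is thus no analytic obstacle, and since the identity holds entrywise for all $1\leq i,j,k\leq d$, it holds as an equality of tensors, finishing the proof.
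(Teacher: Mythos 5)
Your computation is correct and is precisely the argument the paper alludes to when it says the equivariance is ``easy to see'' from linearity of integration (deferring details to the cited reference): expand $\dot Y_i = \sum_\alpha A_{i\alpha}\dot X_\alpha$, interchange the finite sums with the iterated integrals, and recognize the congruence formula \eqref{eq:learningTask}. Nothing further is needed.
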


Clearly, we can write every piecewise linear path $X:[0,1]\rightarrow \R^d$ with $d$ segments as a linearly transformed axis path $X=A*\psi$ for a suitable matrix $A\in\R^{d\times d}$. In fact, the (cumulative) columns of $A$ are precisely those points in $X$ that are allowed to be nondifferentiable. Inspired by \cite{futorny2008tridiagonal} let $\oplus$ denote the \emph{direct sum of matrices}. 
\par
The \emph{orbit} of $C$ under the group $\mathrm{I}_s\oplus\GL_{d-s}:=\{\mathrm{I}_s\oplus A\mid A\in\GL_{d-s}\}$ is the set of elements to which $C$ can be transformed by the group action. We denote it by $(\mathrm{I}_s\oplus\GL_{d-s})*C$. 
%With equivariance (\Cref{lem:equivariance_signature}) we can therefore evaluate every signature of a piecewise linear path via the signature of the axis path. 
\par
In \cite[Theorem 6.2]{bib:PSS2019} it is shown that the \emph{stabilizer} of the core tensor $C$ is \emph{trivial} with respect to the action under $\GL_d$, i.e., if $C=A*C$ for some $A\in\GL_d$ then $A=\mathrm{I}_d$. 

The main strategy is to compute Gau{\ss} transformations $Q\in\GL_d$ and update them at each iteration step $s\in\{1,\dots,d-1\}$. We do this until we  reach $Q*G=C$, which serves as our tensor analogous to congruence normal forms of matrices, \cite[Theorem 1.1]{futorny2008tridiagonal}, \cite[Theorem 1.1]{horn2006canonical}, or \cite[Theorem 2]{lee1996note}. Finally, we invert all Gau{\ss} operations and obtain our recovery $A=Q^{-1}$ with $G=A*C$, using the result of trivial stabilizers mentioned above. 
First we generalize this result on stabilizers, so that we can iteratively use it in our path recovery algorithm. 
\begin{theorem}\label{thm:substab}
    For $G\in{\GL_d}*C$ and $1\leq s \leq d$, the following are equivalent:
    \begin{enumerate}[(i)]
        \item $G_{i,j,k}=C_{i,j,k}$
    for all $1\leq i,j,k\leq d$ with 
    $i=j=k\leq s$ or $k\leq
    \min(s,i-1,j)$ or $j\leq\min(s,i-1,k)$. 
    \item $G\in(\mathrm{I}_s\oplus {\GL_{d-s}})*C$. 
    \end{enumerate}
\end{theorem}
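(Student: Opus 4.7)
The proof splits into two directions. For $(ii) \Rightarrow (i)$, I substitute $A = \mathrm{I}_s \oplus B$ directly into \eqref{eq:learningTask} and leverage the upper triangular support of $C$. In the diagonal case $i = j = k \leq s$, the only contributing triple is $\alpha = \beta = \gamma = i$, which lies in the identity block, giving $G_{iii} = C_{iii} = 1$. For $k \leq \min(s, i-1, j)$, the support constraint $\alpha \leq \beta \leq \gamma \leq k \leq s$ together with the block structure of $A$ forces $\gamma = k$, $\beta \leq s$, and $\alpha \leq k < i$ with $\alpha \leq s$; but then $A_{i\alpha}$ vanishes regardless of whether $i \leq s$ (since then $\alpha = i$ would be required, yet $\alpha < i$) or $i > s$ (since $\alpha \leq s$), so $G_{ijk} = 0 = C_{ijk}$. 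The case $j \leq \min(s, i-1, k)$ is symmetric.

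For $(i) \Rightarrow (ii)$, I invoke the uniqueness of $A \in \GL_d$ with $G = A * C$ from the trivial stabilizer in \cite[Theorem 6.2]{bib:PSS2019} and proceed by induction on $s$. The inductive step exploits the self-similarity of $C$: by \eqref{eq:coretensor}, the subtensor $(C_{\alpha\beta\gamma})_{\alpha,\beta,\gamma \geq s}$ is itself the core tensor in dimension $d - s + 1$. Under the inductive hypothesis $A = \mathrm{I}_{s-1} \oplus A'$ with $A' \in \GL_{d-s+1}$, the subtensor $(G_{ijk})_{i,j,k \geq s}$ therefore equals $A' * C^{(d-s+1)}$, and the conditions in (i) added at level $s$ translate into the base case $s = 1$ applied to this reduced $(d-s+1)$-dimensional problem.

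The main obstacle is thus the base case $s = 1$. Setting $f := A_{1,\cdot}$ and contracting $C$ with $f$ in the third and second slot respectively produces two upper triangular matrices $M(f), M'(f) \in \R^{d \times d}$, linear in $f$, such that $G_{ij1} = A_{i,\cdot} M(f) A_{j,\cdot}^\top$ and $G_{i1k} = A_{i,\cdot} M'(f) A_{k,\cdot}^\top$. Invertibility of $A$ then upgrades the scalar vanishing conditions for $i \geq 2$ and all $j, k$ into the vector conditions $A_{i,\cdot} M(f) = A_{i,\cdot} M'(f) = 0$; since $d-1$ linearly independent rows lie in each left kernel, we get $\rank M(f) \leq 1$ and $\rank M'(f) \leq 1$. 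A direct analysis of the column structure of these two triangular matrices, combined with $G_{111} = (\sum_\alpha f_\alpha)^3 = 1$, then isolates $f = e_1$, after which the now-explicit left kernel $\Span(e_2, \ldots, e_d)$ of $M(e_1)$ forces $A_{i,1} = 0$ for $i \geq 2$ and $A_{11} = 1$. The technical difficulty is that neither rank condition alone pins down $f$: both must be combined to rule out spurious degenerate solutions arising from vanishing diagonal entries.
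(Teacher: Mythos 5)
Your proposal is correct, and while the backward direction and the reduction of general $s$ to $s=1$ via the self-similarity of $C$ coincide with the paper's argument (\Cref{lem:IsBonC,lem:IsBonC_part2} and the recursion in the proof of \Cref{thm:substab}), your treatment of the base case $s=1$ is genuinely different from \Cref{prop:thm:substabs1}. The paper works with the same two contractions — its linear forms $g_{i1}$ and $h_{i1}$ are exactly your row vectors $A_{i,\cdot}M(f)$ and $A_{i,\cdot}M'(f)$ — but it first extracts $A_{i1}=0$ for $i\geq 2$ from specific entries of these vanishing vectors, and then kills the first row of $A$ by an index induction that repeatedly invokes invertibility of trailing blocks of $A$. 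You instead package the vanishing as the rank bounds $\rank M(f)\leq 1$, $\rank M'(f)\leq 1$, and determine $f=A_{1,\cdot}$ \emph{decoupled from the rest of $A$}: this is arguably cleaner, since the only input about the other rows is that they span a $(d-1)$-dimensional complement. The decisive step you only assert — that the two rank bounds plus $\sum_\alpha f_\alpha=1$ force $f=e_1$ — does go through, and should be written out: letting $m$ be the largest index with $f_m\neq 0$, column $m$ of the upper triangular $M(f)$ is $f_m(3,\dots,3,1,0,\dots,0)^\top$ with the $1$ in slot $m$, so for $m\geq 3$ proportionality of column $m-1$ to it (zero in slot $m$) forces $f_{m-1}+3f_m=0$ and $3f_{m-1}+6f_m=0$, hence $f_m=0$, a contradiction; the residual case $m=2$ gives $(f_1+3f_2)f_2=0$ from $M(f)$ and $f_1f_2=0$ from $M'(f)$, which together with $f_1+f_2=1$ yield $f_2=0$, $f_1=1$. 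Your closing remark is accurate: $M(f)$ alone admits the spurious solution $f=(3/2,-1/2,0,\dots,0)$, so $M'(f)$ is indeed needed in the last step. With that computation included, your route is a valid and somewhat more modular replacement for \Cref{prop:thm:substabs1}.
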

Note that for $s=d$ this implies the original result on trivial stabilizers from \cite[Theorem 6.2]{bib:PSS2019}. In Examples \ref{ex:tensor_running_example_start_s3} and \ref{ex:d4recovery_precont} we illustrate the theorem for $s=1$ and $s=2$, respectively.  
\begin{example}\label{ex:tensor_running_example_start_s3} Consider a given $G\in{\GL_4}*C$ with entries
 \begin{equation*}
\left[\begin{array}{ccccccccccccccccccc}
\mathbf1 & 0 & 0 & 0 & \vrule 
& 6 & 12 & 3 & 9 & 
\vrule & 3 & 9 & 3 & 9 & 
\vrule & 6 & 15 & 3 & 12 \\
\mathbf0 & \mathbf0 & \mathbf0 & \mathbf0 & 
\vrule & \mathbf0 & 8 & 4 & 10 & 
\vrule & \mathbf0 & 7 & 4 & 10 & 
\vrule & \mathbf0 & 10 & 4 & 13 \\
\mathbf0 & \mathbf0 & \mathbf0 & \mathbf0 & 
\vrule & \mathbf0 & 1 & 1 & 1 & 
\vrule & \mathbf0 & 1 & 1 & 1 & 
\vrule & \mathbf0 & 1 & 1 & 1 \\
\mathbf0 & \mathbf0 & \mathbf0 & \mathbf0 & 
\vrule & \mathbf0 & 4 & 4 & 7 & 
\vrule & \mathbf0 & 4 & 4 & 7 & 
\vrule & \mathbf0 & 4 & 4 & 8\end{array}\right]
\end{equation*}
\noindent
we can use \Cref{thm:substab} for $s=1$ and obtain $G\in(\mathrm{I}_1\oplus \GL_3)*C$. We have marked all entries in bold that satisfy the conditions from the first part of the theorem.
\end{example}

For a proof, we state several auxiliary results. We start with the backward direction of \Cref{thm:substab} and consider the following implications. 

\begin{lemma}\label{lem:IsBonC}
    If $G\in(\mathrm{I}_s\oplus\GL_{d-s})*C$, then $G_{i,j,k}=C_{i,j,k}$ for all  $1\leq i,j\leq d$ and $1\leq k\leq s$. 
\end{lemma}
\begin{proof} Let $k\leq s$ and $G:=A*C$ for some $A\in\mathrm{I}_s\oplus\GL_{d-s}$.  If $1\leq i,j\leq s$ then 
\begin{align*}G_{i,j,k}
&=\sum_{\alpha=1}^d\sum_{\beta=1}^d\sum_{\gamma=1}^dC_{\alpha,\beta,\gamma}A_{i,\alpha}A_{j,\beta}A_{k,\gamma}=C_{i,j,k}A_{i,i}A_{j,j}A_{k,k}=C_{i,j,k}
\end{align*}
with the special shape of $A$. If, on the other hand, $i>s$ or $j>s$, then 
\begin{align*}G_{i,j,k}
&=\sum_{\alpha=1}^k\sum_{\beta=1}^kC_{\alpha,\beta, k}A_{i,\alpha}A_{j,\beta}A_{k,k}=0=C_{i,j,k}
\end{align*}
where we also used that the core tensor \eqref{eq:coretensor} is upper triangular. 
\end{proof}

\begin{lemma}\label{lem:IsBonC_part2}
If $G\in(\mathrm{I}_s\oplus\GL_{d-s})*C$ then $G_{ijk}=C_{ijk}$ for all  
    $j\leq\min(s,i-1)$ and $1\leq k\leq d$. 
\end{lemma}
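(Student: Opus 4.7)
The plan is to expand $G_{ijk}$ directly from the definition of the matrix-tensor congruence action in \eqref{eq:learningTask}, writing $A=\mathrm{I}_s\oplus B$ for some $B\in\GL_{d-s}$, and then exploit two facts in combination: that the second index $j$ satisfies $j\leq s$, forcing $A_{j\beta}=\delta_{j\beta}$; and that the core tensor $C$ is upper triangular, so $C_{\alpha j\gamma}\neq 0$ only when $\alpha\leq j\leq \gamma$. After collapsing the $\beta$-sum, the expression becomes
\[
G_{ijk}=\sum_{\alpha=1}^{j}\sum_{\gamma=j}^{d} C_{\alpha j\gamma}\,A_{i\alpha}A_{k\gamma}.
\]
Since $i>j$ by assumption, $C_{ijk}=0$ by upper triangularity, so the goal reduces to showing that every surviving summand above vanishes.

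Next, I would split into two cases according to how $i$ compares with $s$. If $i\leq s$, then $A_{i\alpha}=\delta_{i\alpha}$, so only $\alpha=i$ contributes to the sum; but the constraint $\alpha\leq j<i$ from upper triangularity of $C$ rules this out, forcing $G_{ijk}=0$. If instead $i>s$, then by the block form $A=\mathrm{I}_s\oplus B$ we have $A_{i\alpha}=0$ for all $\alpha\leq s$, and since the remaining summands require $\alpha\leq j\leq s$, again every term vanishes. In either case $G_{ijk}=0=C_{ijk}$, as required.

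There is no serious obstacle here: the lemma is a straightforward bookkeeping exercise that parallels the argument of \Cref{lem:IsBonC}, only using the sparsity pattern of $A$ in the first (rather than third) mode together with upper triangularity of $C$. The only point requiring mild care is remembering that the $\beta$-sum collapses because $j\leq s$ falls inside the identity block, after which the case split on $i\leq s$ versus $i>s$ is forced by the shape of $A$ and finishes the proof.
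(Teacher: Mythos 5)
Your proof is correct and is exactly the routine adaptation of \Cref{lem:IsBonC} that the paper has in mind when it declares the proof ``similar'' and omits it: collapse the $\beta$-sum using $j\leq s$, then kill the surviving terms via upper triangularity of $C$ (which forces $\alpha\leq j<i$) together with the block shape of $A$ in the first mode. Nothing to correct.
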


\begin{proof}
    Clearly, we have $i>j$, thus $C_{i,j,k}=0$ for all $1\leq k\leq d$. If $G:=A*C$ with $A\in\mathrm{I}_s\oplus\GL_{d-s}$ we obtain 
    \begin{align*}G_{i,j,k}
&=\sum_{\alpha=1}^j\sum_{\gamma=j}^dC_{\alpha,j,\gamma}A_{i,\alpha}A_{j,j}A_{k,\gamma}=0=C_{i,j,k}
\end{align*}
since $j\leq s$ and $A_{i,\alpha}=0$ for all $1\leq \alpha\leq j$. 
\end{proof}

We follow with an immediate consequence of shuffle relations. 

\begin{lemma}\label{lem:diagonalElemCongruence}
    If $G=A*C$ with $A\in\GL_d$ and $C$ according to \eqref{eq:coretensor}, then 
    \begin{equation*}
        G_{i,i,i} = {\left(A_{i,1}+\dots+A_{i,d}\right)}^3
    \end{equation*}
    for all $1\leq i\leq d$. 
\end{lemma}
\begin{proof}
Let $v\in\R^d$ denote the first signature of the piecewise linear path with columns of $A$ as its segments. The first signature stores the differences from start to end point in every dimension, that is, $v_i=A_{i,1}+\dots+A_{i,d}$ for all $1\leq i\leq d$, see \cite[Page 2]{bib:PSS2019}. With shuffle relations, \cite[Section 1.4]{bib:R1993} or \cite{bib:AFS2019}, the third signature of this path is given by $6v^3_i$, so the claim follows by the normalization of our core tensor $C$. 
\end{proof}

We proceed with the forward  direction of  \Cref{thm:substab}. First, we state it for $s=1$. 
\begin{proposition}\label{prop:thm:substabs1}
    If $G\in{\GL_d}*C$ satisfies 
    $G_{i,j,k}=C_{i,j,k}$ for all $1\leq i,j,k\leq d$ with $i=j=k=1$, 
    $k=1<i$ or $j=1<i$, 
    then $G\in(1\oplus {\GL_{d-1}})*C$. 
\end{proposition}
\begin{proof}
The proof is an adaptation of 
\cite[Theorem 6.2]{bib:PSS2019} so that it requires fewer assumptions on $G=A*C$ with $A\in\GL_d$. With $G_{1,1,1}=C_{1,1,1}=1$ and \Cref{lem:diagonalElemCongruence} we obtain the first row sum, 
$A_{1,1}+\dots+A_{1,d}=1$. 
By rearranging summation we can write the entries of $G$ as   
\begin{align}G_{i,j,k}
=
{\sum_{\alpha}A_{i,\alpha}A_{j,\alpha}A_{k,\alpha}}
&+
{\sum_{\alpha<\beta}3A_{i,\alpha}A_{j,\alpha}A_{k,\beta}\label{eq:PPS2019Thm62}}+
{\sum_{\alpha<\beta}3A_{i,\alpha}A_{j,\beta}A_{k,\beta}}
+
{\sum_{\alpha<\beta<\gamma}6A_{i,\alpha}A_{j,\beta}A_{k,\gamma}}
\end{align}
for every $1\leq i,j,k\leq d$. We recall a factorization of \eqref{eq:PPS2019Thm62} from the proof of 
\cite[Theorem 6.2]{bib:PSS2019} in terms of the standard dot product, that is, 
$G_{i,j,1}
=\langle g_{i,1}, A_{j,\bullet}\rangle$  
with 
\begin{equation}\label{eq:proofStrongerStab1}g_{i,1}(\beta):=A_{i,\beta}A_{1,\beta}
+\sum_{\gamma>\beta}3A_{i,\beta}A_{1,\gamma}
+\sum_{\alpha<\beta}3A_{i,\alpha}A_{1,\beta}
+\sum_{\gamma>\beta>\alpha}6A_{i,\alpha}A_{1,\gamma}
\end{equation}
for $1\leq\beta\leq d$. Since $A$ is invertible and $G_{i,j,1}=C_{i,j,1}=0$ for all $j$ and $i>1$ by assumption, we have $g_{i,1}=0$ for all $i>1$. 
We introduce a new factorization of \eqref{eq:PPS2019Thm62} similar to the above one, 
$G_{i,1,k}=\langle h_{i,1},A_{k,\bullet}\rangle $, 
where 
\begin{equation}\label{eq:proofStrongerStab1_1}
h_{i,1}(\gamma):=A_{i,\gamma}A_{1,\gamma}
+\sum_{\gamma>\alpha}3A_{i,\alpha}A_{1,\alpha}
+\sum_{\gamma>\alpha}3A_{i,\alpha}A_{1,\gamma}
+\sum_{\gamma>\beta>\alpha}6A_{i,\alpha}A_{1,\beta}
 \end{equation}
 for $1\leq \gamma\leq d$. By assumption $G_{i,1,k}=C_{i,1,k}$ for $i>1$ and arbitrary $k$. Again, this implies for all $i>1$ that $h_{i,1}=0$, and in particular
 \begin{equation}\label{eq:proofStrongerStab2}
 0=h_{i,1}(1)=A_{i,1}A_{1,1}.
 \end{equation}
Substituting $A_{1,1}+\dots+A_{1,d}=1$ and \eqref{eq:proofStrongerStab2} in \eqref{eq:proofStrongerStab1} yields 
  \begin{equation*}0=g_{i,1}(1)=A_{i,1}A_{1,1}+\sum_{\gamma=2}^d3A_{i,1}A_{1,\gamma}
  =3A_{i,1}\left(\sum_{\gamma=1}^dA_{1,\gamma}\right)=3A_{i,1}\end{equation*}
  for all $i>1$, 
  that is $A_{2,1}=A_{3,1}=\dots=A_{d,1}=0$. %With this, and again \eqref{eq:proofStrongerStab0}, we obtain $A_{11}=1$. 
  We proceed with the first row of $A$. By inserting $\gamma=2$ in \eqref{eq:proofStrongerStab1_1} we obtain
  \begin{equation}\label{eq:proofStrongerStab3}
  0=h_{i,1}(2)=A_{i,2}A_{1,2}
  \end{equation}
  for all $i>1$. We already showed that 
  \begin{equation*}A=
  \begin{bmatrix}
  A_{1,1}&A_{1,2}&\dots&A_{1,d}\\
  0&A_{2,2}&\dots&A_{2,d}\\
  \vdots&\vdots&\ddots&\vdots\\
  0&A_{d,2}&\dots&A_{d,d}\\
  \end{bmatrix}
  \end{equation*}
 is an upper block-triangular matrix, so the lower right block is invertible. Therefore, there exists $i'>1$ such that $A_{i',2}\not=0$. We use \eqref{eq:proofStrongerStab3} and get $A_{1,2}=0$. 
 \par 
 Assume inductively that $A_{1,2}=\dots =A_{1,n-1}=0$ for $3\leq n\leq d$. We show that $A_{1,n}=0$. For this we insert 
 $\gamma=n$ in \eqref{eq:proofStrongerStab1_1}, use the induction hypothesis, and  obtain
 \begin{equation}\label{eq:proofStrongerStab4}
 0=h_{i,1}(n)=A_{i,n}A_{1,n}+\sum_{\alpha=2}^{n-1}3A_{i,\alpha}A_{1,n}
 \end{equation}
 for all $i>1$. For the sake of contradiction, assume that $A_{1,n}\not=0$. Then, we can multiply \eqref{eq:proofStrongerStab4} by the inverse of $3A_{1,n}$ and obtain 
 \begin{equation*}\begin{bmatrix}A_{2,2}\\\vdots \\A_{d,2}\end{bmatrix}
 +
 \dots
 +
 \begin{bmatrix}A_{2,n-1}\\\vdots \\A_{d,n-1}\end{bmatrix}
 +
 \frac13\begin{bmatrix}A_{2,n}\\\vdots \\A_{d,n}\end{bmatrix}
 =0,\end{equation*}
contradicting that $A$ is invertible. Therefore, we conclude $A_{1,n}=0$, which completes our induction. Hence, we know $A_{1,2}=\dots=A_{1,d}=0$. With $A_{1,1}+\dots+A_{1,d}=1$  we obtain $A_{1,1}=1$.
\end{proof}

We can now prove the main result of this section. 

\begin{proof}[Proof of \Cref{thm:substab}]For the backward direction we assume $G\in(\mathrm{I}_s\oplus\GL_{d-s})*C$ and consider three cases for $1\leq i,j,k\leq d$. If $i=j=k\leq s$ or $k\leq\min(s,i-1,j)$ we use \Cref{lem:IsBonC} and obtain $G_{i,j,k}=C_{i,j,k}$. In the remaining case $j\leq \min(s,i-1,k)$ we use \Cref{lem:IsBonC_part2}. \par
For the forward direction we can use a similar recursion as in the proof of \cite[Theorem 6.2]{bib:PSS2019}. For this consider $G=A*C$ with $A\in\GL_d$ and apply \Cref{prop:thm:substabs1}, that is $A=1\oplus A'$ with $A'\in\GL_{d-1}$. If $s>1$ then let $C'$ denote the core tensor for $d-1$ and set $G':=A'*C'$. Clearly we have 
\begin{equation*}
C'_{i-1,j-1,k-1}=C_{i,j,k}=G_{i,j,k}=G'_{i-1,j-1,k-1}
\end{equation*}
for all $2\leq i,j,k\leq d$ with 
    $i=j=k\leq s$ or $k\leq
    \min(s,i-1,j)$ or $j\leq\min(s,i-1,k)$. After changing indices and setting $s':=s-1$, this translates to 
    \begin{equation*}C'_{i',j',k'}=G'_{i',j',k'}
    \end{equation*}
for all $1\leq i',j',k'\leq d-1$
    such that  $i'=j'=k'\leq s'$ or $k'\leq
    \min(s',i'-1,j')$ or $j'\leq\min(s',i'-1,k')$. 
    \par
    Thus, we can use \Cref{prop:thm:substabs1} for $G'$, $C'$ and $s'$, resulting in $A'\in1\oplus\GL_{d-1}$. This implies $A=\mathrm{I}_2\oplus\GL_{d-2}$ and recursively,  $A\in\mathrm{I}_{s}\oplus\GL_{d-s}$. 
\end{proof}

\section{Lower and diagonal Gau{\ss} operations}\label{sec:learningAlgorithm}

Our Gau{\ss} transformations can be grouped in three types: \emph{upper}, \emph{lower}, and \emph{diagonal}. We also make use of permutation matrices, but this becomes relevant only implicitly. 
In this section we begin with the lower and diagonal transformations. Those transformations are especially easy because they can  be read immediately from the tensor. For every $1\leq s<d$  and $y\in\R^{d-s}$ let 
\begin{equation}\label{eq:def_Lsy}L^{(s,y)}:=\mathrm{I}_d+\sum_{\gamma=s+1}^dy_{\gamma-s}\mathrm{E}_{\gamma, s}\end{equation}
 denote our \emph{lower Gau{\ss} operation}. Here $\mathrm{E}_{i,j}$ with $1\leq i,j\leq d$ denotes the standard basis of $\R^{d\times d}$. Furthermore, for every $1\leq s\leq d$ and $h\in\R\setminus\{0\}$ let \begin{equation}\label{eq:def_Dsy}D^{(s,h)}:=\mathrm{I}_{s-1}\oplus{\sqrt[3]{h}}\,\oplus\,\mathrm{I}_{d-s}\end{equation}
 denote our \emph{diagonal Gau{\ss} operation}.
 %We provide an example for fixed dimension $d$ and iteration step $s$.  

\begin{example}For dimension $d=4$ and iteration step $s=2$, we get
    \begin{equation*}L^{(2,y)}
    =
    \begin{bmatrix}
    1 & 0 &0 &0\\
    0 & 1 &0 &0\\
    0 & y_1 & 1 & 0 \\
    0 & y_2 & 0 & 1
    \end{bmatrix}\in\GL_4
    \quad
    \text{ and }   \quad
    D^{(2,h)}
    =\begin{bmatrix}
    1 & 0 &0 &0\\
    0 & \sqrt[3]{h} &0 &0\\
    0 & 0 & 1 & 0 \\
    0 & 0 & 0 & 1
    \end{bmatrix}\in\GL_4. 
    \end{equation*} 
\end{example}

Assuming that our tensor satisfies the following nonlinear relations, we can use our lower and diagonal operations to transform it into a sub-group orbit with respect to an increased iteration step index. 

\begin{theorem}\label{thm:LowerTransformations}
    Let $1\leq s<d$.  If $H\in(\mathrm{I}_{s-1}\oplus \GL_{d-s+1})*C$ satisfies 
    \begin{enumerate}[(i)]
     \item\label{thm:LowerTransformations1} $H_{s,s,s}\not=0$,
    % \item\label{conj:UpperTransformations2} $J_{sss}=1$,
     \item\label{thm:LowerTransformations2} $H_{i,s,s}=H_{s,i,s}$ for all $s\leq i\leq d$,
     \item\label{thm:LowerTransformations3} $H_{i,s,s}H_{s,j,s}=H_{i,j,s}H_{s,s,s}$ for all $1\leq i\leq d$, $s\leq j\leq d$, and 
     \item\label{thm:LowerTransformations31} $H_{i,s,s}H_{s,s,k}=H_{i,s,k}H_{s,s,s}$ for all $s<i,k\leq d$,  
     %\item\label{conj:UpperTransfromations32} $H_{sis}H_{jss}= H_{sss}H_{jis}$ for $s\leq i\leq d$ and $1\leq j \leq d$
 \end{enumerate}
then $D^{(s,H_{s,s,s}^{-1})}L^{(s,y)}*H\in(\mathrm{I}_{s}\oplus \GL_{d-s})*C$, 
where $y:=-H_{s,s,s}^{-1}\begin{bmatrix}{H_{s,s+1,s}}&\dots& {H_{s,d,s}}\end{bmatrix}$.
\end{theorem}
 We will soon see that we can transform any tensor from the orbit ${\GL_{d}}*C$ so that the nonlinear relations above hold. First, we illustrate the theorem. 

\begin{example}\label{ex:s1_fromH_to_s2}
Consider the given tensor $H\in{\GL_4}*C$ with folding 
 \begin{equation*}
\left[\begin{array}{ccccccccccccccccccc}-1 & -1 & 1 & -1 & \vrule & 5 & -7 & -8 & -4 & \vrule & 4 & -5 & -7 & -5 & \vrule & 5 & -10 & -8 & -7 \\-1 & -1 & 1 & -1 & \vrule & 5 & 1 & -4 & 6 & \vrule & 4 & 2 & -3 & 5 & \vrule & 5 & 0 & -4 & 6 \\1 & 1 & -1 & 1 & \vrule & -5 & 8 & 9 & 5 & \vrule & -4 & 6 & 8 & 6 & \vrule & -5 & 11 & 9 & 8 \\-1 & -1 & 1 & -1 & \vrule & 5 & -3 & -4 & 3 & \vrule & 4 & -1 & -3 & 2 & \vrule & 5 & -6 & -4 & 1\end{array}\right]
\end{equation*}

\noindent
and observe that the conditions \eqref{thm:LowerTransformations1} to \eqref{thm:LowerTransformations31} of \Cref{thm:LowerTransformations} are satisfied for $s=1$. We perform a diagonal scaling with $h=H^{-1}_{1,1,1}=-1$ and a lower transformation with $y=\begin{bsmallmatrix}-1&
 1&
 -1\end{bsmallmatrix}$ and obtain  $G:=D^{(1,h)}L^{(1,y)}*H$ from  Example \ref{ex:tensor_running_example_start_s3}. With \Cref{thm:substab}, we conclude  $G\in(\mathrm{I}_1\oplus\GL_3)*C$.  
\end{example}

\begin{proof}[Proof of \Cref{thm:LowerTransformations}]
First we apply the lower Gau{\ss} transformation from the theorem. For this denote $L:=L^{(s,y)}$ with entries 
  $y_{i}:=-H_{s,s+i,s}/H_{s,s,s}$ and $J:=L*H$. By assumption we have $H\in(\mathrm{I}_{s-1}\oplus \GL_{d-s+1})*C$. With $L\in\mathrm{I}_{s-1}\oplus \GL_{d-s+1}$ and transitivity we see that $J\in(\mathrm{I}_{s-1}\oplus\GL_{d-s+1})*C$. We compute several entries $J_{i,j,k}$ 
   via a case distinction in $1\leq k\leq d$.  
\begin{enumerate}
  \item\label{thmproof:LowerTransformations_1} If $k<s$ we apply \Cref{lem:IsBonC} for $J$ and $s-1$. Hence $J_{i,j,k}=C_{i,j,k}$ for $1\leq i,j\leq d$. Furthermore $J_{i,j,k}=C_{i,j,k}$ if  
    $j\leq\min(s-1,i-1)$ with \Cref{lem:IsBonC_part2}.
    \item\label{thmproof:LowerTransformations_2} We assume $k=s$ and apply a case distinction in $i$ and $j$. 
    \begin{enumerate}[a.]
        \item\label{thmproof:LowerTransformations_2a}
If $i,j\leq s$ then $J_{i,j,s}=H_{i,j,s}$.
\item\label{thmproof:LowerTransformations_2d} If $i,j>s$ then 
\begin{align*}
    J_{i,j,s}&=H_{s,s,s}L_{i,s}L_{j,s}+H_{s,j,s}L_{i,s}L_{j,j}+H_{i,s,s}L_{i,i}L_{j,s}+H_{i,j,s}L_{i,i}L_{j,j}\\
    &=\frac{H_{s,i,s}H_{s,j,s}}{H_{s,s,s}}-\frac{H_{i,s,s}H_{s,j,s}}{H_{s,s,s}}-\frac{H_{i,s,s}H_{s,j,s}}{H_{s,s,s}}+H_{i,j,s}=0
\end{align*}
using assumption \eqref{thm:LowerTransformations2} and  \eqref{thm:LowerTransformations3}. 
\item\label{thmproof:LowerTransformations_2e} If $i>s$ and $j\leq s$ then 
$J_{i,j,s}=H_{s,j,s}L_{i,s}+H_{i,j,s}L_{i,i}=0$ 
since $H_{s,j,s}=0$ with $j\leq s$ and $H_{i,j,s}$ with $i>s\geq j$. 
\end{enumerate}
In particular, case \eqref{thmproof:LowerTransformations_2a} implies $J_{s,s,s}=H_{s,s,s}$. Furthermore, with cases \eqref{thmproof:LowerTransformations_2d} and \eqref{thmproof:LowerTransformations_2e} we obtain $J_{i,j,k}=0=C_{i,j,k}$ for $k=s\leq \min(i-1,j)$.  Finally we obtain $J_{i,j,k}=0=C_{i,j,k}$ for $j\leq \min(s,i-1)$ with \Cref{lem:IsBonC_part2} and case \eqref{thmproof:LowerTransformations_2e}. 
\item\label{thmproof:LowerTransformations_3} We assume $k>s$. If $j\leq\min(s-1,i-1)$, then $J_{i,j,k}=0$ with \Cref{lem:IsBonC_part2}.
 If $s=j<i$, then 
\begin{equation*}
J_{i,j,k}=H_{s,s,s}L_{i,s}L_{k,s}+H_{i,s,s}L_{k,s}+H_{s,s,k}L_{i,s}+H_{i,s,k}=0
\end{equation*}
 using assumption \eqref{thm:LowerTransformations2}, \eqref{thm:LowerTransformations3} and  \eqref{thm:LowerTransformations31} similarly as in case \eqref{thmproof:LowerTransformations_2d}. 
\end{enumerate}
We perform a last scaling with $D:=D^{(s,H_{s,s,s})}$ and obtain 
$G:=D*J=DL*H$. 
We combine cases \eqref{thmproof:LowerTransformations_1} to \eqref{thmproof:LowerTransformations_3} from above and see that $G$ satisfies all assumptions in  \Cref{thm:substab} for given $s$, thus $G\in\left(\mathrm{I}_{s}\oplus\GL_{d-s}\right)*C$. 
\end{proof}

\section{Upper Gau{\ss} transformations}

It remains to discuss the upper Gau{\ss} transformations, which provide a tensor that satisfies the non-linear relations from  \Cref{thm:LowerTransformations}. Contrary to the above, we cannot read the upper transformation from our tensor, but we have to solve a certain linear system instead.
\par 
In the low-dimensional regime, that is, for iteration steps $s\in\{d-2,d-1\}$, this system does not have a unique solution. In those cases, we use the explicit sequence of congruence transformations, elaborated in \Cref{sec:lowdimCase}. 
\par 
In the remaining cases, we can compute a transformation of the following form: for $1\leq s<d$ and $x\in\R^{d-s}$ let 
    \begin{equation}\label{eq:def_Usx}
U^{(s,x)}:=\left(L^{(s,x)}\right)^\top=\mathrm{I}_d+\sum_{\gamma=s+1}^dx_{\gamma-s}\mathrm{E}_{s,\gamma}
    \end{equation} 
denote our \emph{upper Gau{\ss} operation}. The following observation is the key idea for our upper Gau{\ss} transformation. 

\begin{lemma}\label{lem:technicalStepsUsx}
  For $G\in\R^{d\times d\times d}$ and $U^{(s,x)}$ according to \eqref{eq:def_Usx} let $H:=U^{(s,x)}*G$.
  If $i,j\not=s$ then 
    \begin{equation*}H_{i,j,s}-H_{j,i,s}=G_{i,j,s}-G_{j,i,s}+\sum_{\gamma =s+1}^d\left(G_{i,j,\gamma}-G_{j,i,\gamma}\right)x_{\gamma-s}.\end{equation*} 
\end{lemma}
\begin{proof}
   For every $s$ and $x\in\R^{d-s}$ we have 
    \begin{align*}H_{i,j,s}
    &=\sum_{\alpha=1}^d\sum_{\beta=1}^d\sum_{\gamma=1}^dG_{\alpha,\beta,\gamma}U^{(s,x)}_{i,\alpha}U^{(s,x)}_{j,\beta}U^{(s,x)}_{s,\gamma}
    %\\
    %&
    =\sum_{\gamma=s+1}^dG_{i,j,\gamma}x_{\gamma-s}+G_{i,j,s}
    \end{align*}
    since $i,j\not=s$ implies that only for $\alpha=i$ and $\beta=j$ we have a non-zero summand. 
\end{proof}

Recall that we search for an upper Gau{\ss} operation from a given tensor $G$ to $H$ so that the conditions in \Cref{thm:LowerTransformations} are satisfied. In particular, we want $H_{i,j,s}=H_{j,i,s}$ for every $s<i,j\leq d$ in the iteration step $s$. %Using \Cref{lem:technicalStepsUsx}
This motivates, together with \Cref{lem:technicalStepsUsx}, the definition of the following linear system $Mx=B$. 

\begin{definition}\label{def:MB}
     For $G\in\R^{d\times d\times d}$ and $1\leq s < d-2$ we define the 
     matrix 
$M\in\R^{(d-s)^2\times (d-s)}$ and the vector $B\in\R^{(d-s)^2}$ entrywise through
 \begin{equation*}M_{(a-1)(d-s)+b,\gamma}:= G_{a+s,b+s,\gamma+s}-G_{b+s,a+s,\gamma+s}\end{equation*}
    and $B_{(a-1)(d-s)+b}:= G_{a+s,b+s,s}-G_{b+s,a+s,s}$
    for $1\leq a,b,\gamma\leq d-s$, respectively. 
\end{definition}

Due to symmetry, we can omit several linear relations. 

\begin{remark}\label{rem:redundantEqLinSyst}
Let $M'\in\R^{\binom{d-s}{2}\times (d-s)}$ and $B'\in\R^{\binom{d-s}{2}}$ be determined by $M$ and $B$ according to \Cref{def:MB}, but where all rows with index $a\geq b$ are omitted. We have $Mx=B$ if and only if $M'x=B'$ for all $x\in\R^{d-s}$.  
\end{remark}

\begin{example}\label{ex:d4recovery}
  Consider the following recovery task: given a tensor $G\in{\GL_d}*C$ in first mode matrix folding
  \begin{small}
    \begin{equation*}\left[
    \begin{array}{ccccccccccccccccccc}-8 & 8 & 4 & 10 & \vrule & 8 & -8 & -4 & -10 & \vrule & 7 & -7 & -4 & -10 & \vrule & 10 & -10 & -4 & -13 \\-4 & -2 & 5 & -7 & \vrule & 4 & 1 & -4 & 6 & \vrule & 2 & 2 & -3 & 5 & \vrule & 5 & 0 & -4 & 6 \\13 & -7 & -10 & -4 & \vrule & -13 & 8 & 9 & 5 & \vrule & -10 & 6 & 8 & 6 & \vrule & -16 & 11 & 9 & 8 \\-8 & 2 & 5 & -4 & \vrule & 8 & -3 & -4 & 3 & \vrule & 5 & -1 & -3 & 2 & \vrule & 11 & -6 & -4 & 1\end{array}
    \right]
    \end{equation*}
    \end{small}

\noindent
 we want to recover $A\in{\GL_d}$ such that $G=A*C$.
    We solve the linear system 
    $Mx=B$,  where $[M\mid B]$ is given by 
    \begin{small}
\begin{equation*}
    \begin{bmatrix}
    G_{2,3,2}-G_{3,2,2}&
    G_{2,3,3}-G_{3,2,3}&
    G_{2,3,4}-G_{3,2,4}&\vrule & G_{2,3,1}-G_{3,2,1}\\
    G_{2,4,2}-G_{4,2,2}&
    G_{2,4,3}-G_{4,2,3}&
    G_{2,4,4}-G_{4,2,4}&\vrule &G_{2,4,1}-G_{4,2,1}\\
    G_{3,4,2}-G_{4,3,2}&
    G_{3,4,3}-G_{4,3,3}&
    G_{3,4,4}-G_{4,3,4}&\vrule &G_{3,4,1}-G_{4,3,1}
    \end{bmatrix}
    =
    \begin{bmatrix}-12 & -9 & -15&\vrule &-12\\
    9 & 6 & 12 &\vrule &9\\
    9 & 9 & 12&\vrule &9\end{bmatrix}
\end{equation*}
\end{small}
    \noindent
 according to \Cref{def:MB} and satisfies $\rank(M\mid B)=\rank(M)=3$.
Note that we have already restricted our system to $3$ from the original $9=(d-s)^2$ linear relations. With out loss of generality, is always possible due to symmetry; see Remark \ref{rem:redundantEqLinSyst} for further details.
We compute the unique solution  $x=\begin{bsmallmatrix}1&
 0&
 0\end{bsmallmatrix}$. The resulting tensor $H:=U^{(1,x)}*G$ is presented in Example \ref{ex:s1_fromH_to_s2}, where it is transformed to $(1\oplus\GL_{3})*C$. 
 \end{example}

We provide a parametrized example where the matrix $M$ from above has full rank for all $d$.

\begin{lemma}\label{lem:exFullRank}
  For  $1\leq s<d-2$ and $G:=(\mathrm{I}_{d}+\mathrm{E}_{d,s})*C$ in the orbit of the core tensor $C$ from \eqref{eq:coretensor}, the matrix $M$ according to \Cref{def:MB} satisfies $\rank(M)=d-s$.
   \end{lemma}
\begin{proof}
 We consider the submatrix  $M''\in\R^{(d-s)\times (d-s)}$ of $M$, with entries
    \begin{equation*}M''_{\beta,\gamma}:=\begin{cases}
    G_{s+1,\beta+s+1,\gamma+s}-G_{\beta+s+1,s+1,\gamma+s} &\beta<d-s\\
     G_{d-1,d,j+s}-G_{d,d-1,\gamma+s}& \beta = d-s.
    \end{cases}\end{equation*}
Note that $M''$ is constructed similarly as in Remark \ref{rem:redundantEqLinSyst}. Here, all except the first $d-s-2$ rows and the last row are omitted.  
We have 
    \begin{equation*}
    M''=\begin{bmatrix}
        -\frac12e_{d-s-1} &\frac12\mathrm{I}_{d-s-1}+\mathrm{U}_{d-s-1}-e_{d-s-1}\mathbbm{1}^\top\\
        0&-\frac12e_{d-s-1}^\top -\frac12e_{d-s-2}^\top
    \end{bmatrix}
    \end{equation*}
    where $\mathrm{U}_{d-s-1}$ denotes the strictly upper triangular matrix with constant one above the diagonal. Therefore, $M''$ is invertible and $M$ has full rank.
\end{proof}

\begin{proposition}\label{prop:genericG_Mfullrank}
   The matrix $M$ according to \Cref{def:MB} has full rank in expectation.  
\end{proposition}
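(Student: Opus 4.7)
The plan is to view the maximal minors of $M$ as polynomials on the parameter space of the orbit, exhibit one such minor that is not identically zero, and then invoke the Schwartz--Zippel-style lemma already used elsewhere in the paper to obtain the expectation claim.

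First, I would parametrize the orbit by $G = A * C$ with $A \in \mathrm{I}_{s-1} \oplus \GL_{d-s+1}$. By \eqref{eq:learningTask} each entry $G_{ijk}$ is a cubic polynomial in the entries of $A$, so every entry of the matrix $M$ from \Cref{def:MB} is a polynomial in these entries, and consequently each $(d-s) \times (d-s)$ minor of $M$ is a polynomial function of $A$. The statement then reduces to showing that at least one such minor polynomial does not vanish identically on the parameter space.

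For the non-vanishing, I would use the submatrix $M''$ constructed in \Cref{lem:exFullRank} together with the explicit witness $A_0 := \mathrm{I}_d + \mathrm{E}_{ds}$. A quick check shows $A_0 \in \mathrm{I}_{s-1} \oplus \GL_{d-s+1}$, since the extra entry sits at position $(d,s)$ and both indices belong to the lower-right $(d-s+1) \times (d-s+1)$ block for every $1 \leq s < d-2$; the lemma then gives $\det(M''(A_0)) \neq 0$. Hence $\det(M'')$ is a polynomial on $\mathrm{I}_{s-1} \oplus \GL_{d-s+1}$ that does not vanish identically, and its zero set is a proper algebraic subvariety of Lebesgue measure zero. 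Applying \cite[Lemma 1]{okamoto1973distinctness}, exactly as in the randomized recursions of \Cref{alg:solve} and \Cref{alg:transformUp_s3}, a random coordinate change $W \in \mathrm{I}_{s-1} \oplus \GL_{d-s+1}$ moves $A$ off this subvariety with probability one, so $M$ attains full rank $d-s$ in expectation.

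The main obstacle is really a compatibility bookkeeping step: confirming that the witness $A_0$ of \Cref{lem:exFullRank} lies inside the restricted group $\mathrm{I}_{s-1} \oplus \GL_{d-s+1}$ for every $1 \leq s < d-2$, and that the rows and columns picked out by $M''$ are indexed consistently with the parametrization used here. Once this is verified, the non-vanishing of the minor polynomial together with the standard density argument immediately deliver the proposition.
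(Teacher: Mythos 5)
Your proposal is correct and follows essentially the same route as the paper: view the maximal minors of $M$ as polynomials in the entries of $A$, use the explicit witness of \Cref{lem:exFullRank} to see that one minor is not identically zero, and conclude via \cite[Lemma 1]{okamoto1973distinctness} that the vanishing locus has measure zero. The only addition is your explicit check that $\mathrm{I}_d+\mathrm{E}_{ds}$ lies in $\mathrm{I}_{s-1}\oplus\GL_{d-s+1}$, a bookkeeping detail the paper leaves implicit.
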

\begin{proof}
For every $G\in{\GL_d}*C$,
    the explicit formula \eqref{eq:learningTask} implies that $G$ is a tensor with polynomial entries in $A\in\GL_d$. Therefore, also $M$ and all its minors are polynomial in $A$. Clearly, one of the $(d-s)\times(d-s)$ minors is not the constant zero polynomial due to \Cref{lem:exFullRank}. The variety of a nonzero polynomial has Lebesgue measure zero \cite[Lemma 1]{okamoto1973distinctness}, so $M$ is invertible for almost every evaluation.
\end{proof}

We now construct an explicit change of coordinates so that the linear system is always solvable. Furthermore, the solution of this system provides the upper Gau{\ss} transformation for  \Cref{thm:LowerTransformations}. 
Let $P^{(s,t)}$ denote the $d\times d$ permutation matrix that represents the cycle $(s,t)$ with $s<t$. For simplicity, we also define $P^{(s,t)}:=\mathrm{I}_d$ if $s=t$. Furthermore, let $\R^*:=\R\setminus\{0\}$. 

\begin{proposition}\label{conj:UpperTransformations}
If $1\leq s <d$ and $G\in\left(\mathrm{I}_{s-1}\oplus \GL_{d-s+1}\right)*C$ then 
     there exist $s\leq t$ and $x\in\R^{d-s}$ such that 
    \begin{equation}\label{eq:UpTrafo}
    H:=U^{(s,x)}P^{(s,t)}*G\in\left(\mathrm{I}_{s-1}\oplus\begin{bmatrix}\R^*&0_{d-s}^\top\\\R^{d-s}&\GL_{d-s}\end{bmatrix}\right)*C.\end{equation}
    Furthermore, $H$ satisfies the conditions \eqref{thm:LowerTransformations1} to \eqref{thm:LowerTransformations31} of \Cref{thm:LowerTransformations}. 
\end{proposition}
\begin{proof}
Let $G=(\mathrm{I}_{s-1}\oplus B)*C$ with $B\in\GL_{d-s+1}$. 
We have $P^{(s,t)}(\mathrm{I}_{s-1}\oplus B)\in\mathrm{I}_{s-1}\oplus\GL_{d-s+1}$ for all $t\geq s$. 
Since $B$ is invertible, we can choose $t\geq s$ such that $(B^{-1})_{1,t-s+1}\not=0$. With this, $((\mathrm{I}_{s-1}\oplus B^{-1})P^{(s,t)})_{s,s}\not=0$ so that   
\begin{equation*}
((\mathrm{I}_{s-1}\oplus B^{-1})P^{(s,t)})^{-1}=P^{(s,t)}(\mathrm{I}_{s-1}\oplus B)=:\mathrm{I}_{s-1}\oplus W
\end{equation*}
satisfies $(W^{-1})_{1,1}\not=0$. 
With 
$x:=\frac1{(W^{-1})_{1,1}}\begin{bmatrix}
        (W^{-1})_{1,2}&\dots&(W^{-1})_{1,d-s}\end{bmatrix}$
   we obtain
    \begin{equation*}
    A:=U^{(s,x)}(\mathrm{I}_{s-1}\oplus W)=
    \mathrm{I}_{s-1}\oplus\begin{bmatrix} \frac1{(W^{-1})_{1,1}}&0&\dots &0\
    \\W_{2,1}&W_{2,2}&\dots&W_{2,d-s}\\
    \vdots&&\ddots&\vdots\\
    W_{d-s,1}&W_{d-s,2}&\dots&W_{d-s,d-s}
    \end{bmatrix}
    \end{equation*}
   which proves the inclusion \eqref{eq:UpTrafo}. Let $h:=1/{(W^{-1})_{1,1}}$. 
    To show condition  \eqref{thm:LowerTransformations1} of \Cref{thm:LowerTransformations} we use the special form of $A$ and obtain  $H_{s,s,s}=A_{s,s}A_{s,s}A_{s,s}=h^3\not=0$. 
  If $i>s$ then 
   \begin{equation}\label{eq:proof_conj:UpperTransformations_1}H_{i,s,s}=\sum_{\alpha=s}^dC_{\alpha, s,s}A_{i,\alpha}h^2=A_{i,s}h^2
   \end{equation}
   and similarly $H_{s,j,s}=A_{j,s}h^2$ 
   for $j>s$. This implies condition \eqref{thm:LowerTransformations2}.  For part \eqref{thm:LowerTransformations3} we first assume $i>s$ and $j>s$, so 
\begin{equation*}
H_{i,j,s}=\sum_{\alpha=s}^d\sum_{\beta=s}^dC_{\alpha,\beta, s}A_{i,\alpha}A_{j,\beta}h=A_{i,s}A_{j,s}h=\frac{H_{i,s,s}H_{s,j,s}}{H_{s,s,s}}
\end{equation*}
   using part \eqref{thm:LowerTransformations1} and \eqref{eq:proof_conj:UpperTransformations_1}. Now we assume $i<s$ and $j>s$. 
   Then $H_{i,s,s}=3h^2$, 
   so   
   $H_{i,j,s}=3A_{j,s}h$ which 
   concludes part \eqref{thm:LowerTransformations3}. For part \eqref{thm:LowerTransformations31} we have 
   \begin{equation*}H_{s,s,k}=\sum_{\gamma=s}^dC_{s,s,\gamma}h^2A_{k,\gamma}\quad\text{ and }\quad H_{i,s,k}=\sum_{\alpha=s}^d\sum_{\gamma=s}^dC_{\alpha,s,\gamma}A_{i,\alpha}hA_{k,\gamma}
   \end{equation*}
   so  $H_{i,s,s}H_{s,s,k}=H_{i,s,k}H_{s,s,s}$ when using $H_{s,s,s}=h^3$ and \eqref{eq:proof_conj:UpperTransformations_1}.  
\end{proof}

\begin{corollary}\label{cor:MxB_solvable}
   The linear system $Mx=B$ with  $M$ and $B$ according to \Cref{def:MB} is  generically solvable.
\end{corollary}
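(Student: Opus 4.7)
The plan is to reduce the corollary to Proposition~\ref{conj:UpperTransformations} via the identification provided by Lemma~\ref{lem:technicalStepsUsx}. The key observation is that the linear system $Mx = B$ is, up to a global sign, the system that expresses the slice-symmetry $H_{ijs} = H_{jis}$ for $s < i, j \leq d$ of the upper Gauß transform $H := U^{(s,x)} * G$. Indeed, Lemma~\ref{lem:technicalStepsUsx} gives
\begin{equation*}
    H_{ijs} - H_{jis} = (G_{ijs} - G_{jis}) + \sum_{\gamma=s+1}^d (G_{ij\gamma} - G_{ji\gamma})\, x_{\gamma-s},
\end{equation*}
and with the indexing from Definition~\ref{def:MB} this is exactly $B + Mx$ in the row associated to $(i,j)$. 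Hence $Mx = B$ is solvable iff $Mx = -B$ is solvable (column spaces are closed under negation), which is in turn equivalent to the existence of some $x \in \R^{d-s}$ for which $U^{(s,x)} * G$ satisfies $H_{ijs} = H_{jis}$ on the whole block $s < i, j \leq d$.

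Next, I would apply Proposition~\ref{conj:UpperTransformations}: for every $G \in (\mathrm{I}_{s-1} \oplus \GL_{d-s+1}) * C$ there exist $t \geq s$ and $x \in \R^{d-s}$ such that $H := U^{(s,x)} P^{(s,t)} * G$ satisfies conditions \eqref{thm:LowerTransformations1}--\eqref{thm:LowerTransformations31} of Theorem~\ref{thm:LowerTransformations}. Combining (ii) and (iii) with $H_{sss} \neq 0$ yields
\begin{equation*}
    H_{ijs}\, H_{sss} = H_{iss}\, H_{sjs} = H_{sis}\, H_{jss} = H_{jis}\, H_{sss},
\end{equation*}
hence $H_{ijs} = H_{jis}$ for all $s < i, j \leq d$. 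To conclude, I would argue that for generic $G = (\mathrm{I}_{s-1} \oplus B) * C$ with $B \in \GL_{d-s+1}$ the permutation $P^{(s,t)}$ may be taken trivial: the proof of the proposition picks $t$ so that $(B^{-1})_{1,t-s+1} \neq 0$, and $t = s$ is admissible precisely on the nonempty Zariski-open locus $\{B : (B^{-1})_{11} \neq 0\}$ of $\GL_{d-s+1}$.

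The main obstacle I anticipate is precisely this last genericity step; the nongeneric situation (requiring a nontrivial $P^{(s,t)}$) falls outside the scope of the corollary and is exactly what the recursive randomized branch of Algorithm~\ref{alg:solve} is designed to handle. Once $t = s$ is secured on a Zariski-dense subset, Proposition~\ref{conj:UpperTransformations} delivers $x$ for which $U^{(s,x)} * G$ has the required slice symmetry, which by the identification above is equivalent to the solvability of $Mx = B$.
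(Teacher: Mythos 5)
Your proof is correct and follows essentially the same route as the paper: take the $x$ constructed in \Cref{conj:UpperTransformations}, note that generically $t=s$ is admissible because $(B^{-1})_{11}\neq 0$ on a Zariski-dense locus, derive the slice symmetry $H_{ijs}=H_{jis}$ from conditions \eqref{thm:LowerTransformations1}--\eqref{thm:LowerTransformations3}, and translate back to the linear system via \Cref{lem:technicalStepsUsx}. Your observation that solvability of $Mx=B$ and $Mx=-B$ coincide is in fact slightly more careful than the paper, which asserts that $x$ itself solves $Mx=B$ although \Cref{lem:technicalStepsUsx} combined with \Cref{def:MB} yields $Mx=-B$ for that $x$.
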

\begin{proof}
We use the solution $x$ that is constructed in the proof of \Cref{conj:UpperTransformations}. Generically, we can use $s=t$ so that $B^{-1}_{1,1}=W^{-1}_{1,1}$. This follows, for example, with the adjugate matrix of $B$ and \cite[Lemma 1]{okamoto1973distinctness}. We have \begin{equation*}H_{i,j,s}=\frac{H_{i,s,s}H_{s,j,s}}{H_{s,s,s}}=\frac{H_{j,s,s}H_{s,i,s}}{H_{s,s,s}}=H_{i,j,s}
\end{equation*} for $H:=U^{(s,x)}*G$. In particular, $x$ solves $Mx=B$ according to \Cref{lem:technicalStepsUsx}. 
\end{proof}

We use \Cref{alg:solve} to compute this transformation in iteration steps $1\leq s<d-2$.

\begin{algorithm}
\begin{algorithmic}
\caption{Transformation matrix $\mathsf{up_{\geq4}}(G,s)$}\label{alg:solve}
\STATE\textbf{Input:} $1\leq s<d-2$ and $G\in(\mathrm{I}_{s-1}\oplus\GL_{d-s+1})*C$  \hfill\text{// $C$ according to \eqref{eq:coretensor}}
       \FOR{$1\leq a,b,\gamma\leq d-s$}{
    \STATE$M_{(a-1)(d-s)+b,\gamma}\gets G_{a+s,b+s,\gamma+s}-G_{b+s,a+s,\gamma+s}$ \hfill\text{// $M\in\R^{(d-s)^2\times (d-s)}$}
      \STATE$B_{(a-1)(d-s)+b}\gets G_{a+s,b+s,s}-G_{b+s,a+s,s}$\hfill\text{// $B\in\R^{(d-s)^2}$}
   }
   \ENDFOR
    \IF{$\rank(M)=\rank(M\mid B)=d-s$}{
   \STATE Solve $Mx=B$ for $x\in\R^{d-s}$\;
     \RETURN $U^{(s,x)}$\;
     }
     \ELSE{
   \RETURN $\mathsf{up_{\geq 4}}(W*G,s)W\text{ for random }W\in\mathrm{I}_{s-1}\oplus\GL_{d-s+1}$\;
     }
     \ENDIF
\end{algorithmic}
\end{algorithm}

The algorithm creates a matrix $M\in\R^{(d-s)^2\times (d-s)}$ and a vector $B\in\R^{(d-s)^2}$ that contain certain linear combinations of the entries of our input tensor $G\in\R^{d\times d\times d}$. 
\par
For a generic tensor from our orbit, the resulting system $Mx=B$ has unique solution $x\in\R^{d-s}$. 
If our system $Mx=B$ is not uniquely solvable,  we perform a randomized change of coordinates and repeat recursively. 
We show now that we can transform any tensor from the orbit ${\GL_{d}}*C$ so that the nonlinear relations for the lower Gau{\ss} operations from the previous section hold.

The proof of \Cref{conj:UpperTransformations} shows how we can replace the randomized change of coordinates in \Cref{alg:solve} by a loop over all cycles $(s,t)$ with $t>s$. Having mentioned this, we show that lower and diagonal transformations suffice to translate a generic tensor into a smaller orbit. 

\begin{theorem}\label{thm:linearSolSolvesAlgebraicSystem}
    For every $1\leq s<d-2$ and $G\in(\mathrm{I}_{s-1}\oplus\GL_{d-s+1})*C$, \Cref{alg:solve} returns $U^{(s,x)}W$ with $x\in\R^{d-s}$ and $W\in\GL_d$ such that $H:=U^{(s,x)}W*G$ satisfies the conditions \eqref{thm:LowerTransformations1} to \eqref{thm:LowerTransformations31} of \Cref{thm:LowerTransformations}. 
\end{theorem}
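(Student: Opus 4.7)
My plan is to parametrize $G = Q * C$ with $Q = \mathrm{I}_{s-1} \oplus R$, $R \in \GL_{d-s+1}$, and work with the preimage $\hat Q = U^{(s,x)} W Q$ rather than $H$ itself. Since both $U^{(s,x)}$ and $W$ lie in $\mathrm{I}_{s-1} \oplus \GL_{d-s+1}$, the matrix $\hat Q$ retains the block form and $H = \hat Q * C$. The key reduction is to reinterpret $Mx = B$ in terms of $Q$: expanding $G_{ij\gamma}$ by multilinearity and exploiting that $C$ is supported on $\{\alpha \leq \beta \leq \gamma\}$, the diagonal $\alpha = \beta$ contributions cancel in the antisymmetrization and one obtains
\[
G_{ij\gamma} - G_{ji\gamma} \;=\; \sum_{s \leq \mu < \nu \leq \tau} C_{\mu\nu\tau}\, m^{\mu\nu}_{ij}\, q_{\gamma\tau},
\]
where $m^{\mu\nu}_{ij} = q_{i\mu}q_{j\nu} - q_{j\mu}q_{i\nu}$ is the $2 \times 2$ minor of $R$ on rows $i, j$ and columns $\mu, \nu$. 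Substituting into $Mx = B$ and factoring out the minors shows that, up to sign conventions in the action, the system is equivalent to the row-clearing condition $\hat q_{s\tau} = 0$ for all $\tau > s$, i.e., the $s$-th row of the $\hat R$-block of $\hat Q$ reduces to $(\hat q_{ss}, 0, \ldots, 0)$.

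Once this row-clearing holds, I would verify conditions (i)--(iv) of \Cref{thm:LowerTransformations} by direct substitution into $H_{ijk} = \sum_{s \leq \alpha \leq \beta \leq \gamma} C_{\alpha\beta\gamma}\, \hat q_{i\alpha}\, \hat q_{j\beta}\, \hat q_{k\gamma}$. Because $\hat q_{s\sigma} = 0$ for $\sigma > s$, every sum in which the second or third input slot equals $s$ collapses to summands with $\beta = \gamma = s$, producing closed forms $H_{sss} = \hat q_{ss}^3$, $H_{iss} = H_{sis} = \hat q_{is}\hat q_{ss}^2$ for $i > s$, and $H_{ijs} = \hat q_{is}\hat q_{js}\hat q_{ss}$ for $i, j > s$. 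Condition (i) then follows from the invertibility of $\hat Q$: its $s$-th row cannot vanish, and the only remaining entry is $\hat q_{ss}$. Conditions (ii)--(iv) reduce to the multiplicative identities $\hat q_{is}\hat q_{ss}^2 \cdot \hat q_{js}\hat q_{ss}^2 = \hat q_{is}\hat q_{js}\hat q_{ss} \cdot \hat q_{ss}^3$ and its analogue in the third slot, both immediate from the closed forms.

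For the rank condition governing the if-branch in \Cref{alg:solve}, I would argue via Zariski openness: the entries of $M$ and $B$ are polynomials in the entries of $R$, so $\{\rank(M) < d - s\}$ is a closed subvariety of the orbit, and exhibiting a single $R$ for which $\rank(M) = d - s$ shows it is a proper one; this is the content of the author's \Cref{prop:genericG_Mfullrank} together with \Cref{cor:MxB_solvable}. On that bad locus the algorithm perturbs by a random $W \in \mathrm{I}_{s-1} \oplus \GL_{d-s+1}$; since a generic $W$ moves the input off any fixed proper closed set, the recursion terminates almost surely and in expected constant number of calls. The main obstacle is the first step: carrying the antisymmetrization through the triangular support of $C$ to derive the row-clearing equivalence; after that bridge is built, (i)--(iv) follow by routine sum-collapsing and the rank argument is standard.
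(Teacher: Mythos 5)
Your proposal is correct and follows essentially the same route as the paper: your row-clearing step is exactly the content of \Cref{conj:UpperTransformations} (which builds $x$ from the first row of $W^{-1}$ so that the $s$-th row of the transforming block becomes $(\hat q_{ss},0,\dots,0)$ and then verifies (i)--(iv) via the same sum-collapsing closed forms $H_{sss}=\hat q_{ss}^3$, $H_{iss}=\hat q_{is}\hat q_{ss}^2$, etc.), while \Cref{cor:MxB_solvable} and \Cref{prop:genericG_Mfullrank} supply the identification of that $x$ with the unique solution of $Mx=B$ and the single-witness/Zariski genericity argument you describe. The only caution is that your asserted ``equivalence'' of $Mx=B$ with the row-clearing condition holds only on the full-rank branch (the factorization of $M$ through the $2\times 2$ minors and the trailing block of $Q$ can degenerate, and does so systematically for $s\in\{d-2,d-1\}$), but since you invoke the rank test before using the equivalence and fall back to the random change of coordinates otherwise, the argument is sound.
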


\begin{proof}For generic $G\in(\mathrm{I}_{s-1}\oplus\GL_{d-s+1})*C$, the system $Mx=B$ with  $M$ and $B$ according to \Cref{def:MB} is solvable according to \Cref{cor:MxB_solvable}. Furthermore, the matrix $M$ has full rank; see \Cref{prop:genericG_Mfullrank}. Therefore $x$ is unique and $H:=U^{(s,x)}*G$ satisfies the conditions \eqref{thm:LowerTransformations1} to \eqref{thm:LowerTransformations31} of \Cref{thm:LowerTransformations} thanks to \Cref{conj:UpperTransformations}. If the system $Mx=B$ is not uniquely solvable, we perform a change of coordinates $G':=W*G$ for a random $W$ and proceed recursively with $G'$. The resulting upper Gau{\ss} transformation has to be multiplied by $W$ from the right, since it acts on the new tensor in changed coordinates. 
\end{proof}

\section{The two- and three-dimensional case}\label{sec:lowdimCase}
It turns out that for the two- and three-dimensional case (that is, in the iteration step $s=d-1$ or $s=d-2$ of the recovery algorithm) the linear system $Mx=B$ in \Cref{alg:solve} is always underdetermined. However, in both cases, we can follow a different strategy for our upper Gau{\ss} transformations. We start with dimension two and construct explicit transformations that can be read from the tensor. 

\begin{proposition}\label{lem:transformationTwoDimensions}If $d=2$ and $G\in{\GL_2}*C$, then 
\begin{equation*}H:=
\begin{cases}

U^{(1,\frac{G_{2,1,1}-G_{1,2,1}}{G_{1,2,2}-G_{2,1,2}})}*G
&G_{1,2,2}\not=G_{2,1,2},\\
P^{(1,2)}*G&\text{otherwise,}
\end{cases}\end{equation*}
satisfies all the conditions of \Cref{thm:LowerTransformations}, that is  $H_{1,1,1}\not=0$, 
$H_{1,2,1}=H_{2,1,1}$, 
$H_{1,2,1}H_{2,1,1}= H_{1,1,1}H_{2,2,1}$ and 
$H_{2,1,1}H_{1,1,2} =H_{2,1,2}H_{1,1,1}$.  
\end{proposition}
\begin{proof}
Let  $G=A*C$ with $A\in\GL_2$. We show $A_{2,2}\not=0$ if and only if $G_{1,2,2}\not=G_{2,1,2}$.  
Consider the ideal  $I$ in the polynomial ring $R:=\R[a_{i,j},g_{i,j,k}\mid 1\leq i,j,k\leq 2]$ that is generated by the entries of $a*C-g$.
\par
First, we assume $A_{2,2}=0$.  
With Gr\"obner bases (see \cite{cox1997ideals,OSCAR-book} for an introduction) we can show $g_{1,2,2}-g_{2,1,2}\in I+\langle a_{2,2}\rangle$ and hence 
$G_{1,2,2}=G_{2,1,2}$.  
\par 
Conversely, assume $A_{2,2}\not=0$. With 
\begin{equation*}
3(a_{1,1}a_{2,2} - a_{2,1}a_{1,2})a_{2,2}- (g_{1,2,2}-g_{2,1,2})\in I
\end{equation*}
and  $A\in\GL_2$ we obtain 
\begin{equation*}
0\not=3\det(A)A_{2,2}=G_{1,2,2}-G_{2,1,2},
\end{equation*} thus $G_{1,2,2}\not=G_{2,1,2}$. In total we have showed
\begin{equation*}G_{1,2,2}=G_{2,1,2}\iff A_{2,2}=0.
\end{equation*}
Assuming $G_{1,2,2}=G_{2,1,2}$, we verify that 
$H=P^{(1,2)}*G$ satisfies the claimed relations from the proposition. We have 
\begin{equation*}
\left[
    \begin{array}{ccccc}
    H_{1,1,1} & H_{1,2,1} &\vrule & H_{1,1,2} & H_{1,2,2} \\
    H_{2,1,1} & H_{2,2,1} &\vrule & H_{2,1,2} & H_{2,2,2} 
    \end{array}
    \right]
    =
    \left[
    \begin{array}{ccccc}
    G_{2,2,2} & G_{2,1,2} &\vrule & G_{2,2,1} & G_{2,1,1} \\
    G_{1,2,2} & G_{1,1,2} &\vrule & G_{1,2,1} & G_{1,1,1} 
    \end{array}
    \right]
    \end{equation*}
    \noindent
so $H_{1,2,1}=H_{2,1,1}$ follows by assumption. Since $a_{2,2}=0$ and  \begin{equation*}
g_{2,2,2}-a_{2,1}^3\in I+\langle a_{2,2}\rangle
\end{equation*}
we use $A\in\GL_{2}$ and obtain 
\begin{equation*}
H_{1,1,1}=G_{2,2,2}=A_{2,1}^3\not=0.
\end{equation*}
Note that the relations $H_{1,2,1}H_{2,1,1}= H_{1,1,1}H_{2,2,1}$ and 
$H_{2,1,1}H_{1,1,2} =H_{2,1,2}H_{1,1,1}$ are analogous and can be found in the Github repository. 
\par
Assume now $G_{1,2,2}\not =G_{2,1,2}$, that is, $A_{2,2}\not=0$. 
We move on to the field of rational expressions $\R(a_{1,1},a_{1,2},a_{2,1},a_{2,2})$, set $g:=a*C$, and observe
\begin{equation*}x:=\frac{g_{2,1,1}-g_{1,2,1}}{g_{1,2,2}-g_{2,1,2}}=-\frac{a_{1,2}}{a_{2,2}}.
\end{equation*}
Hence, with $h:=U^{(1,x)}*g$ and a formal verification in \texttt{OSCAR} we see  
$h_{1,2,1} = h_{2,1,1}$, $h_{1,2,1}h_{2,1,1} = h_{1,1,1}h_{2,2,1}$ and $h_{2,1,1}h_{1,1,2} =h_{2,1,2}h_{1,1,1}$. The claim follows with an evaluation, given that $A_{2,2}\not=0$. \par
We assume $H_{1,1,1}=0$ for the sake of contradiction and consider the ideal $J$ over $\R[a_{i,j},h_{i,j,k}\mid 1\leq i,j,k\leq 2]$ generated by $a*C-h$ and the relations from the proposition. With \texttt{OSCAR}, $a_{1,2}h_{2,2,2}$ and $a_{1,1}h_{2,2,2}$ are in the radical of $J$, that is, $A_{1,2}H_{2,2,2}=A_{1,1}H_{2,2,2}=0$. As a consequence of shuffle relations, see \cite[Section 1.4]{bib:R1993} or \Cref{lem:diagonalElemCongruence}, we know $H_{2,2,2}\not=0$, so $A_{1,2}=A_{1,1}=0$, contradicting $A\in\GL_2$. 
\end{proof}

\begin{algorithm}
\begin{algorithmic}
\caption{Transformation matrix $\mathsf{up_2}(G)$}\label{alg:transformUp_s2}
\STATE\textbf{Input: } $G\in(\mathrm{I}_{d-2}\oplus\GL_{2})*C$\hfill\text{// $C$ according to \eqref{eq:coretensor}} 
   \IF{$G_{d-1, d, d}\not=G_{d, d-1, d}$}{
   \RETURN $U^{(d-1, \frac{G_{d,d-1,d-1}- G_{d-1,d,d-1}}{G_{d-1, d, d}-G_{d, d-1, d}})}$\;
   }
    \ELSE{
       \RETURN $P^{(d-1,d)}$\;}
       \ENDIF
       \end{algorithmic}
\end{algorithm}

Embedding \Cref{lem:transformationTwoDimensions} in higher dimensions, we can formulate an analogous to \Cref{thm:linearSolSolvesAlgebraicSystem} that provides the upper Gau{\ss} operation method for the special iteration step $s=d-1$ of our recovery algorithm, \cref{alg:transformUp_s2}. 

\begin{corollary}\label{cor:linearSolSolvesAlgebraicSystemd2}
    For all $G\in(\mathrm{I}_{d-2}\oplus\GL_{2})*C$, \cref{alg:transformUp_s2} returns $Q\in\GL_d$ such that $H:=Q*G$ satisfies the conditions of \Cref{thm:LowerTransformations} \eqref{thm:LowerTransformations1} to \eqref{thm:LowerTransformations31} with $s=d-1$. 
\end{corollary}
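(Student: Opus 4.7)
The strategy is to reduce the statement to the two-dimensional \Cref{lem:transformationTwoDimensions} by exploiting block structure. Since $G\in(\mathrm{I}_{d-2}\oplus\GL_2)*C$, I write $G=(\mathrm{I}_{d-2}\oplus A)*C$ for some $A\in\GL_2$. Both candidate outputs $U^{(d-1,x)}$ and $P^{(d-1,d)}$ of \Cref{alg:transformUp_s2} have the block form $Q=\mathrm{I}_{d-2}\oplus Q_0$ with $Q_0\in\GL_2$ being precisely the $2\times 2$ matrix that \Cref{lem:transformationTwoDimensions} assigns to the tail. Hence $H=Q*G=(\mathrm{I}_{d-2}\oplus\tilde A)*C$ with $\tilde A:=Q_0 A$, and for indices $i,j,k\in\{d-1,d\}$ the slice $(H_{ijk})$ coincides with $\tilde A*C^{(2)}$, where $C^{(2)}$ is the core tensor \eqref{eq:coretensor} in dimension two.

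With this identification in hand, \Cref{lem:transformationTwoDimensions} directly supplies $\tilde H_{111}\neq 0$, $\tilde H_{121}=\tilde H_{211}$, $\tilde H_{121}\tilde H_{211}=\tilde H_{111}\tilde H_{221}$, and $\tilde H_{112}\tilde H_{121}=\tilde H_{212}\tilde H_{111}$ for $\tilde H:=\tilde A*C^{(2)}$. Conditions \eqref{thm:LowerTransformations1}, \eqref{thm:LowerTransformations2}, and \eqref{thm:LowerTransformations31} of \Cref{thm:LowerTransformations} at $s=d-1$ involve only the last two indices and follow immediately. For condition \eqref{thm:LowerTransformations3}, the subcase $j=d-1$ is trivial, and the subcases $i\in\{d-1,d\}$ with $j=d$ likewise reduce to the two-dimensional relations above, so the only remaining case is $1\leq i<d-1$ with $j=d$.

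The main obstacle is this mixed-index case. The key observation is that \Cref{alg:transformUp_s2} always triangularizes $A$, i.e.\ $\tilde A_{12}=0$: in the first branch the proof of \Cref{lem:transformationTwoDimensions} shows that the chosen $x$ equals $-A_{12}/A_{22}$, which annihilates $(U^{(1,x)}A)_{12}$; in the second branch the case condition $G_{d-1,d,d}=G_{d,d-1,d}$ was there shown to be equivalent to $A_{22}=0$, so $(P^{(1,2)}A)_{12}=A_{22}=0$ as well. Combining $\tilde A_{12}=0$ with the values $C_{i,d-1,d-1}=3$, $C_{i,d-1,d}=6$, $C_{i,d,d-1}=0$, $C_{i,d,d}=3$ (for $i<d-1$) from \eqref{eq:coretensor}, a short expansion of the congruence action gives $H_{i,d-1,d-1}=3\tilde A_{11}^{2}$, $H_{i,d,d-1}=3\tilde A_{11}\tilde A_{21}$, $H_{d-1,d,d-1}=\tilde A_{11}^{2}\tilde A_{21}$, and $H_{d-1,d-1,d-1}=\tilde A_{11}^{3}$, so both sides of condition \eqref{thm:LowerTransformations3} evaluate to $3\tilde A_{11}^{4}\tilde A_{21}$, completing the proof.
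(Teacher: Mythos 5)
Your proof is correct, and it follows the same basic route the paper intends: the paper offers no written proof of this corollary, only the remark that it is obtained by ``embedding'' \Cref{lem:transformationTwoDimensions} into dimension $d$. Your block-diagonal identification of the $\{d-1,d\}$-slice of $H$ with $\tilde A * C^{(2)}$ is exactly that embedding, and it immediately yields conditions \eqref{thm:LowerTransformations1}, \eqref{thm:LowerTransformations2}, \eqref{thm:LowerTransformations31}, and the pure-tail instances of \eqref{thm:LowerTransformations3}.

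What you add, and what the paper's one-line remark silently skips, is the mixed-index case $1\leq i<d-1$ of condition \eqref{thm:LowerTransformations3}, which genuinely does not reduce to the two-dimensional statement. Your resolution --- observing that both branches of \Cref{alg:transformUp_s2} force $\tilde A_{12}=0$ (via $x=-A_{12}/A_{22}$ in the first branch, and via the equivalence $G_{d-1,d,d}=G_{d,d-1,d}\iff A_{22}=0$ in the second) and then computing $H_{i,d-1,d-1}H_{d-1,d,d-1}=3\tilde A_{11}^{4}\tilde A_{21}=H_{i,d,d-1}H_{d-1,d-1,d-1}$ --- is the correct analogue of how the paper handles the case $i<s$ in the proof of \Cref{conj:UpperTransformations} (there, too, the argument hinges on the transformed block having zero off-diagonal entries in row $s$). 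I verified your entry computations against \eqref{eq:coretensor}; they are right. So this is a sound and in fact more complete write-up than what the paper provides.
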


Similarly, we can embed the three-dimensional case for step $s=d-2$ and compute our upper Gau{\ss} transformations.  Before stating the algorithm, we continue with our running example.

\begin{example}\label{ex:d4recovery_precont}
We proceed with our path recovery from Example \ref{ex:d4recovery} for iteration step $s=2$. 
Our goal is to transform $G\in\mathrm{I}_1\oplus\GL_3$ from Example \ref{ex:tensor_running_example_start_s3} with upper Gau{\ss} operations so that we can apply \Cref{thm:LowerTransformations} for $s=2$. 
We observe that $G_{3,4,4}\not=G_{4,3,4}$. With the transformation \begin{equation*}
Q_1:=\mathrm{I}_4-\frac{G_{4,3,3}-G_{3,4,3}}{G_{3,4,4}-G_{4,3,4}}\mathrm{E}_{3,4}
\end{equation*} we obtain $G':=Q_1*G$ with the matrix folding
    \begin{equation*}\left[
    \begin{array}{ccccccccccccccccccc}1 & 0 & 0 & 0 & \vrule & 6 & 12 & -6 & 9 & \vrule & -3 & -6 & 3 & -3 & \vrule & 6 & 15 & -9 & 12 \\0 & 0 & 0 & 0 & \vrule & 0 & 8 & -6 & 10 & \vrule & 0 & -3 & 3 & -3 & \vrule & 0 & 10 & -9 & 13 \\0 & 0 & 0 & 0 & \vrule & 0 & -3 & 3 & -6 & \vrule & 0 & 0 & -1 & 1 & \vrule & 0 & -3 & 4 & -7 \\0 & 0 & 0 & 0 & \vrule & 0 & 4 & -3 & 7 & \vrule & 0 & 0 & 1 & -1 & \vrule & 0 & 4 & -4 & 8\end{array}
    \right]
    \end{equation*}
\noindent
and $G'_{3,4,4}\not=G'_{4,3,4}$. We continue with 
\begin{equation*}
Q_2:=\mathrm{I}_4-\frac{G'_{4,3,2}-G'_{3,4,2}}{G'_{3,4,4}-G'_{4,3,4}}\mathrm{E}_{2,4}
\end{equation*}
and obtain $G'':=Q_2*G'$ with folding
    \begin{equation*}\left[
    \begin{array}{ccccccccccccccccccc}1 & 0 & 0 & 0 & \vrule & 0 & 0 & 3 & -3 & \vrule & -3 & -3 & 3 & -3 & \vrule & 6 & 3 & -9 & 12 \\0 & 0 & 0 & 0 & \vrule & 0 & 0 & 2 & -2 & \vrule & 0 & -1 & 2 & -2 & \vrule & 0 & 1 & -5 & 5 \\0 & 0 & 0 & 0 & \vrule & 0 & -1 & -1 & 1 & \vrule & 0 & -1 & -1 & 1 & \vrule & 0 & 4 & 4 & -7 \\0 & 0 & 0 & 0 & \vrule & 0 & 1 & 1 & -1 & \vrule & 0 & 1 & 1 & -1 & \vrule & 0 & -4 & -4 & 8\end{array}
    \right]
    \end{equation*}
\noindent
and $G''_{2,3,3}\not=G''_{3,2,2}$. With the transformation 
\begin{equation*}Q_3:=\mathrm{I}_4-\frac{G''_{3,2,2}-G''_{2,3,2}}
{G''_{2,3,3}-G''_{3,2,3}}\mathrm{E}_{2,3}
\end{equation*}
the resulting tensor $H:=Q_3*G'$ has entries
    \begin{equation*}\left[
    \begin{array}{ccccccccccccccccccc}
    \mathbf1 & 0 & 0 & 0 & \vrule & 3 & 3 & 0 & 0 & \vrule & -3 & -6 & 3 & -3 & \vrule & 6 & 12 & -9 & 12 \\
    \mathbf0 & \mathbf0 & \mathbf0 & \mathbf0 & \vrule & \mathbf0 & \mathbf1 & 0 & 0 & \vrule & \mathbf0 & -3 & 3 & -3 & \vrule & \mathbf0 & 6 & -9 & 12 \\
    \mathbf0 & \mathbf0 & \mathbf0 & \mathbf0 & \vrule & \mathbf0 & \mathbf0 & \mathbf0 & \mathbf0 & \vrule & \mathbf0 & \mathbf0 & -1 & 1 & \vrule & \mathbf0 & \mathbf0 & 4 & -7 \\
    \mathbf0 & \mathbf0 & \mathbf0 & \mathbf0 & \vrule & \mathbf0 & \mathbf0 & \mathbf0 & \mathbf0 & \vrule & \mathbf0 & \mathbf0 & 1 & -1 & \vrule & \mathbf0 & \mathbf0 & -4 & 8\end{array}
    \right]
    \end{equation*}
\noindent
and clearly satisfies the conditions of \Cref{thm:LowerTransformations} for $s=2$. In fact, the lower and diagonal transformations are trivial here, that is, we can immediately continue with \Cref{thm:substab} for $s=2$ and conclude $H\in(\mathrm{I}_2\oplus\GL_2)*C$. Similarly as in Example \ref{ex:tensor_running_example_start_s3} we marked all entries in bold that are used for the theorem.  We proceed with iteration step $s=3$ in Example \ref{ex:d4recovery_cont}. 
\end{example}
The upper transformations of the example can be read from the tensor. In the case where a division of zero would occur, we instead perform a change of coordinates and proceed recursively. This yields \Cref{alg:transformUp_s3}. 

\begin{algorithm}[ht]
\begin{algorithmic}
\caption{Transformation matrix $\mathsf{up_3}(G)$}\label{alg:transformUp_s3}
\STATE\textbf{Input: }$G\in(\mathrm{I}_{d-3}\oplus\GL_{3})*C$ \hfill\text{// $C$ according to \eqref{eq:coretensor}}
\STATE $G_{\text{init}}\gets G$\;
   \IF{$G_{d-1, d, d}\not=G_{d, d-1, d}$}
   {
   \STATE $Q_1\gets \mathrm{I}_d+\frac{G_{d,d-1,d-1}- G_{d-1,d,d-1}}{G_{d-1, d, d}-G_{d, d-1, d}}\mathrm{E}_{d-1,d}$ \;
        \STATE$G\gets Q_1* G$\;
        \STATE
        $Q_2\gets \mathrm{I}_d+\frac{G_{d,d-1,d-2}- G_{d-1,d,d-2}}{G_{d-1, d, d}-G_{d, d-1, d}}\mathrm{E}_{d-2,d}$ \;
         \STATE$G\gets Q_2* G$\;
        \IF{$G_{d-2, d-1, d-1}\not=G_{d-2, d-1, d-1}$}{
        \STATE$Q_3\gets \mathrm{I}_d+\frac{G_{d-1,d-2,d-2}- G_{d-2,d-1,d-2}}{G_{d-2, d-1, d-1}-G_{d-2, d-1, d-1}}\mathrm{E}_{d-2,d-1}$ \;
       \STATE $G\gets Q_3*G$\;
        \IF{$G_{d-2, d-2, d-2}\not=0$}{
        \RETURN $Q_3Q_2Q_1$\;}
        \ENDIF
        }\ENDIF}
        \ENDIF
         \RETURN $\mathsf{up_3}(W*G_{\text{init}})W\text{ with random }W\in\mathrm{I}_{d-3}\oplus\GL_{3}$
         \end{algorithmic}
\end{algorithm}

\begin{proposition}\label{prop:proofCprects_3}
   For all $G\in(\mathrm{I}_{d-3}\oplus\GL_{3})*C$, \Cref{alg:transformUp_s3} returns $Q\in\GL_d$ such that $H:=Q*G$ satisfies the conditions \eqref{thm:LowerTransformations1} to \eqref{thm:LowerTransformations31} of \Cref{thm:LowerTransformations} for $s=d-2$. 
\end{proposition}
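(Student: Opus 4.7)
The plan is to mirror the strategy of the two-dimensional case in \Cref{lem:transformationTwoDimensions} and reduce the problem to a finite symbolic verification on a $3\times 3\times 3$ block. First I would observe that since $G \in (\mathrm{I}_{d-3}\oplus \GL_3)*C$ and each of the three Gau\ss{} operations $Q_1, Q_2, Q_3$ lies in $\mathrm{I}_{d-3}\oplus \GL_3$, only the entries in the $3\times 3\times 3$ block indexed by $\{d-2,d-1,d\}$ are affected, and the four conditions of \Cref{thm:LowerTransformations} at $s=d-2$ only constrain indices from this block or indices at which the entries are already fixed by \Cref{thm:substab}. Writing $G = A*C$ with $A = \mathrm{I}_{d-3} \oplus A_0$ for some $A_0 \in \GL_3$, the claim reduces to a statement about the $3\times 3\times 3$ tensor $A_0 * C_0$, where $C_0$ denotes the $3\times 3\times 3$ core tensor defined by~\eqref{eq:coretensor}.

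In this reduced setting the three transformations can be understood as successively zeroing out the three above-diagonal entries of $A_0$, so that after applying $Q_3 Q_2 Q_1$ the effective matrix is lower triangular. Following the template of \Cref{lem:transformationTwoDimensions}, I would work in the polynomial ring $R := \R[a_{ij}, g_{ijk}\mid 1\leq i,j,k\leq 3]$ modulo the ideal generated by the entries of $a*C_0 - g$, pass to the fraction field of $\R[a_{ij}]$, substitute $g = a*C_0$, and verify in turn that each scalar $c_i$ defining $Q_i$ coincides with the rational expression in the entries of $A_0$ (and the previously computed $c_j$'s) that eliminates the corresponding upper-triangular entry. Once $Q_3Q_2Q_1 A_0$ has been brought into lower-triangular form, the four conditions of \Cref{thm:LowerTransformations} at $s=d-2$ become direct polynomial identities that can be checked by a Gr\"obner basis, and condition~\eqref{thm:LowerTransformations1}, $H_{d-2,d-2,d-2}\neq 0$, follows from the shuffle relations (\Cref{lem:diagonalElemCongruence}) together with $A_0\in\GL_3$, exactly as at the end of the proof of \Cref{lem:transformationTwoDimensions}. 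The supporting symbolic computations can be placed in the accompanying repository.

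The non-vanishing conditions $G_{d-1,d,d}\neq G_{d,d-1,d}$ and the analogous inequality used to define $Q_3$ correspond to nondegeneracy statements on entries of $A_0$, mirroring the equivalence $A_{22}\neq 0 \iff G_{122}\neq G_{212}$ from the two-dimensional proof. When either denominator vanishes, or the final check $G_{d-2,d-2,d-2}\neq 0$ fails, the algorithm performs a random change of coordinates $W\in\mathrm{I}_{d-3}\oplus\GL_3$ and recurses on $W*G_{\text{init}}$. Since the failure locus is cut out by a proper Zariski-closed subset of $\GL_3$, a random $W$ avoids it with probability one, so the recursion terminates almost surely and returns a matrix $Q\in\GL_d$ for which $H=Q*G$ satisfies all four conditions.

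The step I expect to be the main obstacle is the symbolic bookkeeping after the third transformation: one has to verify not only that $Q_3$ imposes the remaining symmetry $H_{d-2,d-1,d-2}=H_{d-1,d-2,d-2}$, but also that all of the mixed rank-one relations in conditions~\eqref{thm:LowerTransformations3} and~\eqref{thm:LowerTransformations31} at the triples $(i,j,k)$ with $i,j,k\in\{d-2,d-1,d\}$ hold automatically, and that the symmetries already established by $Q_1$ and $Q_2$ are not destroyed. This is a finite but genuinely more intricate computation than in the two-dimensional case, and is best delegated to a computer algebra verification.
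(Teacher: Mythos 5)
Your proposal is correct and follows essentially the same route as the paper: reduce to the $3\times3\times3$ block, substitute $g=a*C$ over the rational function field and verify the four conditions by a symbolic computation, then handle the vanishing of the denominators (and of $G_{d-2,d-2,d-2}$) by noting the exceptional locus is a proper Zariski-closed set, so a random change of coordinates escapes it almost surely. The only cosmetic difference is your added interpretation that $Q_3Q_2Q_1$ lower-triangularizes the effective matrix (the paper verifies the conditions directly without this framing), which is consistent with \Cref{conj:UpperTransformations} but not needed.
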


\begin{proof}
Similarly as in \Cref{lem:transformationTwoDimensions}, we show the statement for $d=3$ and embed it in any dimension $d\geq 3$. For a generic $G\in\GL_3*C$ the three transformations of the algorithm are sufficient. This follows from a formal verification in \texttt{OSCAR}. For this consider the 
field of rational expressions $K:=\R(a_{i,j}\mid 1\leq i,j\leq d)$ and  $g:=a*C$. 
With this we compute
    \begin{equation*}
    g':=\left(\mathrm{I}_3-\frac{g_{3,2,2}-g_{2,3,2}}{g_{2,3,3}-g_{3,2,3}}\mathrm{E}_{2,3}\right)*g,
    \end{equation*}
    then 
    \begin{equation*}g'':=\left(\mathrm{I}_3-\frac{g'_{3,2,1}-g'_{2,3,1}}{g'_{2,3,3}-g'_{3,2,3}}\mathrm{E}_{1,3}\right)*g',\end{equation*} 
    and finally 
    \begin{equation*}h:=\left(\mathrm{I}_3-\frac{g''_{2,1,1}-g''_{1,2,1}}{g''_{1,2,2}-g''_{2,1,2}}\mathrm{E}_{1,3}\right)*g''
    \end{equation*}
and verify $h_{i,1,1}=h_{1,i,1}$,
    $h_{i,1,1}h_{1,j,1}=h_{i,j,1}h_{1,1,1}$ and 
     $h_{i,1,1}h_{1,1,k}=h_{i,1,k}h_{1,1,1}$ in $K$, with indices according to the theorem.  
    \par 
    The expressions $g_{2,3,3}-g_{3,2,3}$, $g'_{2,3,3}-g'_{3,2,3}$, $g''_{1,2,2}-g''_{2,1,2}$ and $h_{1,1,1}$ are not constant zero; see for instance Example \ref{ex:d4recovery_precont}. Thus, with \cite[Lemma 1]{okamoto1973distinctness}, they do not evaluate to zero in expectation.  If they do, we perform a random change of coordinates and proceed recursively. 
\end{proof}

We have now discussed each step of our recovery algorithm in detail. We close this section by completing the specific path recovery from our running example. 
\begin{example}\label{ex:d4recovery_cont}We consider the path recovery task from Example \ref{ex:d4recovery} at iteration step $s=3$. With $H$ from Example \ref{ex:d4recovery_precont} we use the lower transformation $L^{(3,1)}$ and scaling $D^{(3,-1)}$. The result is our core tensor $C=D^{(3,-1)}L^{(3,1)}*H$. Keeping track of all Gau{\ss} operations $Q$ form Examples \ref{ex:tensor_running_example_start_s3}, \ref{ex:s1_fromH_to_s2}, \ref{ex:d4recovery_cont} and \ref{ex:d4recovery_precont}, we obtain 
 \begin{equation*}Q^{-1}=A=\begin{bmatrix}0 & -1 & 0 & -1 \\-1 & 1 & 0 & 1 \\1 & 0 & 0 & 1 \\-1 & 0 & 1 & 1\end{bmatrix}\in\GL_4.
 \end{equation*}
\end{example}

\section{The path recovery algorithm}\label{sec:proofs}

In this section, we formulate \Cref{alg:linear_learning}, which solves our path recovery problem (\Cref{thm:mainResult}). %As input, it takes an arbitrary tensor $G$ from the group orbit ${\GL_d}*C$. 

\begin{algorithm}
\caption{Path recovery}\label{alg:linear_learning}
\begin{algorithmic}
\STATE\textbf{Input:} $G\in{\GL_d}* C$\hfill\text{// $C$ according to \eqref{eq:coretensor}}
\STATE$Q\gets \mathrm{I}_d$
\FOR{$s =1,\dots,d-1$}
  \STATE $Q_{\mathsf{up}}\gets 
\begin{cases}
\mathsf{up_{\geq 4}}(G,s)&s\leq d-3\\
\mathsf{up_3}(G)&s=d-2\\
\mathsf{up_2}(G)&s=d-1. 
\end{cases}$\hfill\text{// output $Q_{\mathsf{up}}$ of \Cref{alg:solve}, \ref{alg:transformUp_s2} or \ref{alg:transformUp_s3}}
\STATE $H\gets Q_{\mathsf{up}}*G$
\STATE $y\gets-H_{s,s,s}^{-1}\begin{bmatrix}{H_{s,s+1,s}}&\dots& {H_{s,d,s}}\end{bmatrix}$\hfill\text{// $y\in\R^{d-s}$}
   \STATE $Q_{\mathsf{dlow}}\gets D^{(s,H_{s,s,s}^{-1})}L^{(s,y)}$\hfill\text{// $L$ and $D$ according to \eqref{eq:def_Lsy} and \eqref{eq:def_Dsy}}
    \STATE$G\gets Q_{\mathsf{dlow}}*H$
    \STATE$Q\gets Q_{\mathsf{dlow}}Q_{\mathsf{up}}Q$
\ENDFOR
\RETURN  $Q^{-1}D^{(s,G_{d,d,d}^{-1})}$\hfill\text{// returns $A$ such that $G=A*C$}
\end{algorithmic}
\end{algorithm}

We prove that \Cref{alg:linear_learning} is sound, i.e., we show \Cref{thm:mainResult}.

\begin{proof}[Proof of \Cref{thm:mainResult}]
From \cite[Corollary 6.3]{bib:PSS2019} we know that for any $G\in {\GL_d}*C$ there exists a unique $A\in\GL_d$ such that $G=A*C$. 
\par
We show that 
\Cref{alg:linear_learning} computes from $G$ a sequence  of transformations $Q^{(0)},\dots,Q^{(d)}\in\GL_d$ such that 
$Q:=Q^{(d)}\dots Q^{(0)}$
transforms $G$ to the core tensor $C$, that is 
\begin{equation}\label{eq:mainProofEq1}
Q*G=C,
\end{equation}
so we can use \cite[Theorem 6.2]{bib:PSS2019}, conclude $QA=\mathrm{I}_d$ and return $Q^{-1}=A$ as desired.  
\par
It remains to construct $Q^{(s)}$ for $0\leq s\leq d$ according to the algorithm and to show \eqref{eq:mainProofEq1}. 
We start with the initial $Q^{(0)}:=\mathrm{I}_d$ and $G^{(0)}:=G$.
We enter the loop of the algorithm, i.e., let  $1\leq s \leq d-1$ and assume recursively that we have constructed 
$Q^{(0)},\dots ,Q^{(s-1)}$ and 
\begin{equation}\label{eq:mainProofEq2}
G^{(s-1)}\in\left(\mathrm{I}_{s-1}\oplus\GL_{d-s+1}\right)*C.
\end{equation}
Note that for $s=1$ the condition \eqref{eq:mainProofEq2} clearly holds. 
According to \Cref{thm:linearSolSolvesAlgebraicSystem}, we 
 obtain  $x^{(s)}\in \mathbb{R}^{d-s}$ and $W^{(s)}\in\mathrm{I}_{s-1}\oplus\GL_{d-s+1}$, when using \Cref{alg:solve}, such that 
\begin{equation*}
H^{(s)}:=U^{(s,x^{(s)})}W^{(s)}*G^{(s-1)}
\end{equation*}
fulfills the conditions \eqref{thm:LowerTransformations1} to \eqref{thm:LowerTransformations31} of \Cref{thm:LowerTransformations}. 
Therefore $y^{(s)}\in\R^{d-s}$ 
with 
$y^{(s)}_i:=-H^{(s)}_{s,s+1,s}/H^{(s)}_{s,s,s}$ provides 
\begin{equation}\label{eq:mainProofEq3}
G^{(s)}:=D^{(s,H^{(s)}_{s,s,s})}L^{(s,y^{(s)})}*H^{(s)}\in\left(\mathrm{I}_{s}\oplus\GL_{d-s}\right)*C.
\end{equation}
Putting it all together, we get 
$Q^{(s)}:=D^{(s,H^{(s)}_{s,s,s})}L^{(s,y^{(s)})}U^{(s,x^{(s)})}W^{(s)}$ and obtain \eqref{eq:mainProofEq3} from \eqref{eq:mainProofEq2} via elementary Gau{\ss} operations. We proceed recursively until we reach $s=d-1$ with 
\begin{equation*}G^{(d-1)}\in\left(\mathrm{I}_{d-1}\oplus\GL_{1}\right)*C.
\end{equation*}
A last scaling $Q^{(d)}:=D^{(d,G^{(d-1)}_{d,d,d})}$ yields \eqref{eq:mainProofEq1} as desired. 
Within the loop of the recovery algorithm, the most computationally expensive step is solving the linear system. \Cref{cor:MxB_solvable} shows that this is generally possible, and it needs $\mathcal{O}(d^3)$ elementary operations. 
\end{proof}

\subsection*{Data availability}
The implementation of our proposed methods, along with the data and experiments, is publicly available at
\begin{center}\url{https://github.com/leonardSchmitz/efficient-path-recovery-from-signatures}.
\end{center}

\subsection*{Declaration of competing interest}
The author declares that he has no known competing financial interests or personal
relationships that could have appeared to influence the work reported in this paper.

\subsection*{Acknowledgments}
We thank Carlos Am\'endola for his help with the proof of \Cref{prop:genericG_Mfullrank}.

\bibliographystyle{alpha} % or whatever style you used
\bibliography{refs.bib}

\end{document}